\newtheorem{theorem}{Theorem}[section]
\newtheorem{corollary}[theorem]{Corollary}
\newtheorem{lemma}[theorem]{Lemma}
\newtheorem{remark}[theorem]{Remark}
\numberwithin{equation}{section}
\newcommand{\p}{\partial}
\newcommand{\cC}{{\mathcal C}}
\newcommand{\cT}{{\mathcal T}}
\newcommand{\cM}{{\mathcal M}}
\newcommand{\cK}{{\mathcal K}}
\newcommand{\cE}{{\mathcal E}}
\newcommand{\R}{{\mathbb R}}
\title[Adiabatic limit of Benney type systems]{Uniform adiabatic limit of Benney type systems}
\author[A. J. Corcho]{\bf Ad\'an J. Corcho}
\address{Instituto de Matem\'atica, Universidade Federal do Rio de Janeiro.\\ 
Centro de Tecnologia - Bloco C. Cidade Universit\'aria.\\
Ilha do Fund\~ao  21941-909.  Rio de Janeiro - RJ, Brazil.}
\email{adan@im.ufrj.br}
\author[J. C. Cordero]{\bf Juan C.  Cordero}
\address{Departamento de Matem\'aticas y Estad\'istica\\ 
Universidad Nacional de Colombia\\
Campus La Nubia - Manizales}
\email{jccorderoc@unal.edu.co}
\thanks{A. J. Corcho was partially supported by CAPES and CNPq (307761/2016-9), Brazil}
\thanks{J. C. Cordero was partially supported by the Departamento de Matem\'aticas y Estad\'istica, Universidad Nacional de Colombia, Sede Manizales}
\subjclass{Primary 35Q55, 35Q60; Secondary 35B65}
\keywords{Perturbed Nonlinear Schr\"odinger Equation, Cauchy Problem, Asymptotic behavior}
\date{\today}
\begin{document}

\maketitle

\setcounter{page}{1}

\begin{quote}
\textbf{Abstract.}
{\small
In this paper we show that solutions of the cubic nonlinear Schr\"odinger equation  are asymptotic limit of solutions  to the Benney system.  Due to the special characteristic of the one-dimensional transport equation same result is obtained for solutions of the one-dimensional Zakharov and 1d-Zakharov-Rubenchik systems. Convergence is reached in the topology
$L^2(\R)\times L^2(\R)$ and with an approximation in the energy space $H^1(\R)\times L^2(\R)$. In the case of the Zakharov system this is achieved without the condition $\partial_t n(x,0) \in \dot H^{-1}(\R)$ for the wave component, improving previous results. 
}
\end{quote}

\section{Introduction}
We consider a family of one-dimensional nonlinear dispersive systems, given by the following coupling equations: 
\begin{equation}\label{S-GBenney}
\begin{cases}
i\partial_tu+ \partial_x^2u= (\tau \vert u\vert^2+\alpha v+  \alpha' z)u, & (x,t)\in \mathbb{R}\times \mathbb{R}^+,\\
\varepsilon \partial_tv + \lambda \partial_xv= \beta\partial_x|u|^2,&\\
\varepsilon \partial_tz + \lambda'\partial_xz= \beta' \partial_x|u|^2,
\end{cases}
\end{equation}
where $u$ is a complex-valued function, $v$ and $z$ are real-valued functions, the physical para\-meters $\tau, \alpha, \alpha'$, $\lambda, \lambda', \beta, \beta'$  are real numbers, and $0 < \varepsilon<1$. This model governs, on certain parameter regimes, the dynamics of many physical phenomena and it is in the ``neighborhood" of some other important models of the mathematical-physics; for example, the Zakharov system, the Davey-Stewartson system and the nonlinear Schr\"odinger equation. We give further information about well-posedness concerning System \eqref{S-GBenney} in Section \ref{Final-Remarks}. 

\medskip 
The family  \eqref{S-GBenney} contains, for instance,  some cases of the non resonant dynamics of small amplitude Alfven waves propagating in a plasma \cite{C-L-P-S, Oliveira1}, modeled by the coupled equations: 
\begin{equation}\label{ZR}
\begin{cases}
i\partial_tu+\partial_x^2u= k(c\vert u\vert^2-\tfrac{1}{2}a \rho+ \varphi)u,\\
\varepsilon \partial_t\rho + \partial_x(\varphi-a\rho)= -k\partial_x|u|^2,&\\
\varepsilon \partial_t\varphi +\partial_x(b\rho-a\varphi)= \tfrac{1}{2}k\partial_x|u|^2,
\end{cases}
\end{equation}
where we have taken the frequency $\omega$ equal to $1$, on the expanded flat wave front to generate $u$. This model is known as 1d-Zakharov-Rubenchik type system,
which in the case  $b>0$ and $b-a^2\ne0$, using the transformation (see \cite{Linares-Matheus})
\begin{equation}
\rho=\psi_1+\psi_2,\quad \varphi=\sqrt{b}(\psi_1-\psi_2),
\end{equation}
can be rewritten as 
\begin{equation}\label{1dZR}
\begin{cases}
i\partial_tu+\partial_x^2u= (c\vert u\vert^2-(\sqrt{b}+\tfrac{a}{2})\psi_2+ (\sqrt{b}-\tfrac{a}{2})\psi_1)u,\\
\varepsilon \partial_t\psi_1 +(\sqrt{b}-a)\partial_x\psi_1= \frac{1}{2}(-1+\tfrac{a}{2\sqrt{b}}) \partial_x|u|^2,&\\
\varepsilon \partial_t\psi_2 -(a+\sqrt{b})\partial_x\psi_2= \frac{1}{2}(-1-\tfrac{a}{2\sqrt{b}})\partial_x|u|^2.
\end{cases}
\end{equation}

\medskip 
Another system included in the family \eqref{S-GBenney} is the 1d-Zakharov system describing Langmuir turbulence \cite{Zakharov}, given by
\begin{equation}\label{Z}
\begin{cases}
i\partial_tu+\partial_x^2u=nu,\medskip \\
\varepsilon^2\partial_t^2n -\partial_x^2n=\partial_x^2\vert u\vert^2,
\end{cases}
\end{equation} 
where $0<\varepsilon=k/c_s<1$, $k$ is a positive parameter and $c_s$ the ionic sound speed. It can be set in the form of \eqref{S-GBenney} because we can write the wave equation of this system as
\begin{equation}
(\varepsilon\partial_{t}-\partial_{x})(\varepsilon\partial_{t}+\partial_{x})n=\partial_x^2\vert u\vert
\end{equation}
and we make
 \begin{equation}
(\varepsilon\partial_t\pm\partial_x)n=\partial_xn_{\mp}
 \end{equation} 
to  consider the two traveling wave profiles. Then we have
\begin{equation}\label{Zred}
\begin{cases}
i\partial_tu+ \partial_x^2u= \frac{1}{2} (n_{-}-n_{+})u,\\
\varepsilon\partial_tn_+ + \partial_xn_+= \partial_x|u|^2,&\\
\varepsilon\partial_tn_- -\partial_xn_-= \partial_x|u|^2,
\end{cases}
\end{equation}
with 
\begin{equation}
\frac{1}{2}(n_{-}-n_{+})=n.
\end{equation}

In \cite{G-T-V} a similar change of variables was performed for the Zakharov system, with complex positive and negative frequency parts for the wave equation, that is
\begin{equation}
n_{\pm}=n\pm i\omega^{-1}\partial_tn,\quad \omega=(-\Delta)^{1/2}.
\end{equation}  

This allowed them to implement the Bourgain method to prove well posedness for the associated Cauchy problem in a wide class of Sobolev regularity for initial data.

\medskip
From the physical point of view, the models \eqref{ZR} and \eqref{Z} are magneto-hydrodynamics type systems in plasma physics \cite{Rubenchik-Zakharov, Kuznetsov-Zakharov}, however \eqref{ZR} is known as the Benney-Roskes system in the context of gravitational water waves \cite{Benney-Roskes}. 

\medskip 
A simpler model in the study of a general theory of water waves interactions  in a nonlinear medium \cite{Benney1,Benney2}, is the Benney system
\begin{equation}\label{S-Benney}
\begin{cases}
i\partial_tu+\partial_x^2u=\alpha uv,\medskip \\
\varepsilon \partial_tv + \lambda \partial_xv=\beta\partial_x|u|^2,&
\end{cases}
\end{equation}
where $\vert\lambda\vert = 1$.

\medskip
Before summarize some useful preliminary results we fix some notations. For a fixed positive time $T$, we are going to write
\begin{equation}
\Vert f\Vert_{L^\infty_T}:=\sup_{t\in[0,T]}\vert f(t)\vert, \quad \Vert f\Vert_{L^\infty_t}:=\sup_{t\ge0} \vert f(t)\vert
\end{equation}
and the symbol $\Vert \cdot \Vert_{L_t^pL_x^q}$ or $\Vert \cdot \Vert_{L_T^pL_x^q}$ will indicate the usual norm of a mixed space for $t\in \R^+$ or $t\in [0, T]$, respectively.

\medskip
We are interested in initial data belonging  to $H^s$, which denotes the classical $L^2$-Sobolev space and  also will be used the mixed norm of $L^\infty([0,T];H^s(\mathbb{R}))$, defined  by 
\begin{equation}
\Vert f\Vert_{L^{\infty}_TH^s_x}:=\sup_{0\leq t\leq T}\Vert f(\cdot,t)\Vert_{H^s}.
\end{equation}	

\medskip 	
Well-posedness for the Cauchy problem associated to the above models in the $H^s(\R^d)$ spaces, as well as other important properties of the dynamics of the solutions, have been extensively considered for many authors. 

\medskip
Regarding the system \eqref{ZR} there are a few works but this one started to get attention in the last years, see for instance \cite{Oliveira1, Oliveira2, Linares-Matheus}. As far as we know the best local and global well-posedness for \eqref{ZR} was established in \cite{Linares-Matheus} and in dimensions $d=2,3$ we refer to the works \cite{Ponce-Saut, Cordero, Luong-Mauser-Saut, Cordero-Quintero} for recent advances concerning existence of solutions, asymptotic behavior, instability of standing waves and blow-up solutions. 

\medskip
	About the existence of global solutions for \eqref{Z} there are global weak solutions for initial data $(u_0, n_0, n_1)\in H^1\times L^2\times H^{-1}$ (with $n_1(x)=\partial_tn(x,0)$)  and smooth solutions 
	\begin{equation}
	u\in L^\infty([0,T];\, H^m),\quad n\in L^\infty([0,T];\, H^{m-1}),
	\end{equation}
	for any time $T>0$ and for initial data in $H^m\times H^{m-1}\times H^{m-2}$ with $m\ge3$ (see \cite{Sulem-Sulem}). Also, when \eqref{Z} is written as in \eqref{HamZ}, the solutions $(u_\varepsilon, n_\varepsilon, v_\varepsilon)\in H^3\times H^2\times H^1$ associated to the bounded family of initial data are bounded uniformly respect to $\varepsilon$ in $H^1\times L^2\times L^2$. This fact was used in  \cite{Added-Added} to prove the weak convergence to the cubic nonlinear Schr\"odinger (cubic-NLS) equation.
A local and global theory in all dimensions for the system \eqref{Z} was established in \cite{G-T-V}, and improvements of this in the two dimensional case were established later in \cite{B-H-H-T} and in one dimensional case below energy space in \cite{Pecher1, Pecher}. 

\medskip
On the other hand, in \cite{Schochet-Weinstein} it was proven the convergence of solutions of the 2$d$ and 3$d$ Zakharov system to the corresponding solutions of the cubic-NLS equation in the subsonic limit ($\varepsilon\to0$), more exactly,  they proved:

\begin{theorem}[\cite{Schochet-Weinstein}]
Let $m\ge[d/2]+3$ $(d=2,3)$, $\partial_tn_\varepsilon(x,0)=\nabla w_{0\varepsilon}$ and  $T$ the time of existence of solutions (independent of $\varepsilon$). 
Assume that  
\begin{align*}
&\Vert u_{0\varepsilon}\Vert_{H^{m+1}}+ \sqrt{\varepsilon}\Vert w_{0\varepsilon}\Vert_{H^m} +\Vert n_{0\varepsilon}\Vert_{H^m}\leq C,\\ 
&\frac{1}{\sqrt{\varepsilon}}\Vert \nabla(n_{0\varepsilon}+\vert u_{0\varepsilon}\vert^2)\Vert_{H^{m-1}}+\Vert \nabla w_{0\varepsilon}\Vert_{H^{m-1}}\leq C,
\end{align*}
and 
$$\lim\limits_{\varepsilon \to 0}\Vert u_{0\varepsilon}-u_0\Vert_{H^{m+1}}=0.$$ Then, we have 
\begin{align}
&n_\varepsilon+\vert u_\varepsilon\vert^2\to0\quad\text{in}\quad \cC([0,T]\times\mathbb{R}^d),\medskip \\
&\nabla\big[ n_\varepsilon+\vert u_\varepsilon\vert^2\big]\to0\quad\text{in}\quad \cC([0,T];H^{m-2}),\medskip\\
& u_\varepsilon-u\to0\quad\text{in} \quad \cC^1([0,T]\times\mathbb{R}^d)\cap \cC^1([0,T]; \cC^2).
\end{align}
\end{theorem}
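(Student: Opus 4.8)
The plan is to track the discrepancy between the acoustic field and the quasi-static profile it should relax to. Introduce
\begin{equation*}
N_\varepsilon := n_\varepsilon + \vert u_\varepsilon\vert^2,
\end{equation*}
which formally vanishes as $\varepsilon\to0$ and measures the departure from the cubic-NLS regime $i\partial_t u + \Delta u = -\vert u\vert^2 u$. Since the wave equation $\varepsilon^2\partial_t^2 n_\varepsilon - \Delta n_\varepsilon = \Delta\vert u_\varepsilon\vert^2$ can be rewritten as $\Delta N_\varepsilon = \varepsilon^2\partial_t^2 n_\varepsilon$, and $n_\varepsilon = N_\varepsilon - \vert u_\varepsilon\vert^2$, a direct computation shows that $N_\varepsilon$ solves the forced wave equation
\begin{equation*}
\varepsilon^2\partial_t^2 N_\varepsilon - \Delta N_\varepsilon = \varepsilon^2\partial_t^2\vert u_\varepsilon\vert^2 ,
\end{equation*}
whose source is formally $O(\varepsilon^2)$, while the Schr\"odinger equation becomes the perturbed cubic-NLS $i\partial_t u_\varepsilon + \Delta u_\varepsilon = -\vert u_\varepsilon\vert^2 u_\varepsilon + N_\varepsilon u_\varepsilon$. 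The hypotheses are tailored to this decomposition: $\tfrac{1}{\sqrt{\varepsilon}}\Vert\nabla N_\varepsilon(0)\Vert_{H^{m-1}}\le C$ says the initial discrepancy energy is $O(\sqrt{\varepsilon})$, and $\Vert\nabla w_{0\varepsilon}\Vert_{H^{m-1}}\le C$ controls $\varepsilon\partial_t N_\varepsilon(0)$ once one uses the conservation law $\partial_t\vert u_\varepsilon\vert^2 = -\nabla\cdot\mathrm{Im}(\bar u_\varepsilon\nabla u_\varepsilon)$.

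Next I would establish a priori Sobolev bounds for $(u_\varepsilon,n_\varepsilon,\varepsilon\partial_t n_\varepsilon)$ in $H^{m+1}\times H^m\times H^{m-1}$, uniform in $\varepsilon$ and valid on a common interval $[0,T]$. The right device is a family of $\varepsilon$-weighted energies
\begin{equation*}
\mathcal E_k(t) = \Vert u_\varepsilon(t)\Vert_{H^{k+1}}^2 + \Vert n_\varepsilon(t)\Vert_{H^{k}}^2 + \varepsilon^2\Vert\partial_t n_\varepsilon(t)\Vert_{H^{k-1}}^2 ,
\end{equation*}
differentiated in time so that the singular contributions coming from $\varepsilon^2\partial_t^2 n_\varepsilon$ cancel against the hyperbolic energy flux and no negative power of $\varepsilon$ survives. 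Commuting derivatives through the two equations, using the $H^s$ algebra property and Moser-type product estimates (both available because $m\ge[d/2]+3$), and closing the resulting differential inequality by a continuity/bootstrap argument yields $\sup_{[0,T]}\mathcal E_m(t)\le C$ independently of $\varepsilon$. These bounds are exactly what make the source $\varepsilon^2\partial_t^2\vert u_\varepsilon\vert^2$ genuinely small.

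With the uniform bounds in hand, I would run the hyperbolic energy estimate for $N_\varepsilon$ on $E(t)=\tfrac{1}{2}\int(\varepsilon^2\vert\partial_t N_\varepsilon\vert^2+\vert\nabla N_\varepsilon\vert^2)\,dx$, for which $\tfrac{d}{dt}E=\int\varepsilon^2\partial_t^2\vert u_\varepsilon\vert^2\,\partial_t N_\varepsilon\,dx$. Integrating in time and integrating by parts once in $t$ to move a time derivative off $\vert u_\varepsilon\vert^2$ (thereby avoiding any $\partial_t^2$ and replacing $\varepsilon^2\partial_t^2 N_\varepsilon$ by $\Delta N_\varepsilon+\varepsilon^2\partial_t^2\vert u_\varepsilon\vert^2$), together with $E(0)=O(\varepsilon)$, gives $\Vert\nabla N_\varepsilon\Vert_{L^\infty_TL^2}^2+\varepsilon^2\Vert\partial_t N_\varepsilon\Vert_{L^\infty_TL^2}^2=O(\varepsilon)$; differentiating the $N_\varepsilon$-equation up to order $m-1$ propagates this to $\nabla N_\varepsilon\to0$ in $\cC([0,T];H^{m-2})$, and Sobolev embedding upgrades $N_\varepsilon\to0$ to $\cC([0,T]\times\R^d)$. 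To compare with the genuine cubic-NLS solution $u$ I set $w:=u_\varepsilon-u$ and subtract the two Schr\"odinger equations to obtain $i\partial_t w+\Delta w = N_\varepsilon u_\varepsilon-(\vert u_\varepsilon\vert^2u_\varepsilon-\vert u\vert^2u)$; since the cubic difference is controlled by $\Vert w\Vert$ with coefficients bounded by the uniform norms of $u_\varepsilon$ and $u$, the inhomogeneity $N_\varepsilon u_\varepsilon$ is $O(\sqrt{\varepsilon})$ in high norm, and $\Vert u_{0\varepsilon}-u_0\Vert_{H^{m+1}}\to0$, a Gronwall estimate on $[0,T]$ forces $w\to0$ in $\cC([0,T];H^{m+1})$. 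Converting the time derivative via the equation (costing two spatial derivatives) and invoking the embeddings valid for $m\ge[d/2]+3$ then delivers the stated $\cC^1([0,T]\times\R^d)\cap\cC^1([0,T];\cC^2)$ convergence.

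The crux, and the step I expect to be the main obstacle, is the uniform-in-$\varepsilon$, uniform-in-time energy estimate of the second paragraph. The singular prefactor $\varepsilon^2$ multiplies the highest time derivative in the wave equation, so careless estimates produce factors $\varepsilon^{-1}$ (indeed $\partial_t n_\varepsilon$ alone is only $O(\varepsilon^{-1})$); the whole argument hinges on choosing the weights so that the dangerous terms appear either as exact time derivatives or as hyperbolic flux terms that integrate by parts cleanly, and on exploiting the structure $\partial_t\vert u\vert^2+\nabla\cdot\mathrm{Im}(\bar u\nabla u)=0$ to trade time derivatives of $\vert u_\varepsilon\vert^2$ for spatial ones controlled by the Schr\"odinger bounds. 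Obtaining $T$ independent of $\varepsilon$ requires the bootstrap to remain stable under the smallness of the initial discrepancy, which is precisely the content of the $\tfrac{1}{\sqrt{\varepsilon}}$-scaled hypothesis.
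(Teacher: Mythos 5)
This theorem is quoted in the paper as background from \cite{Schochet-Weinstein}; the paper itself supplies no proof of it, so there is no internal argument to compare against. Your outline --- introducing $N_\varepsilon = n_\varepsilon + \vert u_\varepsilon\vert^2$, observing that it solves $\varepsilon^2\partial_t^2 N_\varepsilon - \Delta N_\varepsilon = \varepsilon^2\partial_t^2\vert u_\varepsilon\vert^2$, closing uniform-in-$\varepsilon$ weighted energy estimates, running the hyperbolic energy identity with one integration by parts in time to avoid $\partial_t^2 n_\varepsilon$, and finishing with a Gronwall comparison against the cubic NLS --- is precisely the strategy of the original Schochet--Weinstein proof (and of the refinements in \cite{Added-Added}), so it is essentially the same approach as the cited source and I see no gap in the sketch.
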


\medskip 
For solutions with small amplitude were obtained rates of this convergence in \cite{Added-Added} for $d=1,2,3$. For example, when the dimension is $d=1$, we have:
\begin{theorem}[\cite{Added-Added}]
Consider de Cauchy problem associated to the system \eqref{Z}. Let $m\ge 3$ and assume that initial data satisfies
$$u_0\in H^{m+2},\; n_0\in H^{m+1}\; \text{and}\;  n_1=\partial_tn(x,0)\in H^m\; (n_1=\partial_x^2w_0\; \text{with}\;  \partial_x w_0\in L^2).$$
Then, 
\begin{align*}
&\Vert u_\varepsilon(\cdot,t)-u(\cdot,t)\Vert_{H^m}\leq M(t)(\varepsilon+\sqrt{\varepsilon}\Vert n_0+\vert u_0\vert^2 \Vert_{H^m}),\\  \intertext{and}
&\Vert n_\varepsilon(\cdot,t)+\vert u(\cdot,t)\vert^2-\tilde{n}(\cdot,t/\varepsilon)\Vert_{H^{m-1}}\leq M(t)(\varepsilon+\sqrt{\varepsilon}\Vert n_0+\vert u_0\vert^2 \Vert_{H^m}),
\end{align*}
for some function $M(t)\in L^{\infty}_{loc}(\mathbb{R}^+)$,  where $$i\partial_tu+\partial_x^2u=-\vert u\vert^2u,\quad u(x,0)=u_0(x),$$ 
and $\tilde{n}$ is a fitting corrector satisfying the wave equation
$$
\begin{cases}
\partial_{tt}\tilde{n}-\partial_x^2\tilde{n}=0,\\
\tilde{n}(x,0)=n_0(x)+\vert u_0(x)\vert^2, \quad \partial_t\tilde{n}(x,0)=0.
\end{cases}
$$ 
\end{theorem}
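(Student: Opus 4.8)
\smallskip
The plan is to follow the singular--limit philosophy and control two errors at once: the Schr\"odinger error $w_\varepsilon:=u_\varepsilon-u$ and the quasineutrality defect $R_\varepsilon:=n_\varepsilon+|u_\varepsilon|^2$. First I rewrite the first line of \eqref{Z} as
\[
i\p_tu_\varepsilon+\p_x^2u_\varepsilon=-|u_\varepsilon|^2u_\varepsilon+R_\varepsilon u_\varepsilon,
\]
so that $u_\varepsilon$ solves the focusing cubic--NLS forced by $R_\varepsilon u_\varepsilon$; subtracting the equation for $u$ gives $i\p_tw_\varepsilon+\p_x^2w_\varepsilon=-\big(|u_\varepsilon|^2u_\varepsilon-|u|^2u\big)+R_\varepsilon u_\varepsilon$. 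Applying $\varepsilon^2\p_t^2-\p_x^2$ to $R_\varepsilon$ and using the wave equation in \eqref{Z}, the terms $\p_x^2|u_\varepsilon|^2$ cancel and one is left with the forced $\varepsilon$--wave equation $(\varepsilon^2\p_t^2-\p_x^2)R_\varepsilon=\varepsilon^2\p_t^2|u_\varepsilon|^2$, whose source is explicitly $O(\varepsilon^2)$. Since $\tilde n(\cdot,t/\varepsilon)$ solves the same operator with zero source, the difference $S_\varepsilon:=R_\varepsilon-\tilde n(\cdot,t/\varepsilon)$ satisfies
\[
(\varepsilon^2\p_t^2-\p_x^2)S_\varepsilon=\varepsilon^2\p_t^2|u_\varepsilon|^2,\qquad S_\varepsilon(\cdot,0)=0,\qquad \p_tS_\varepsilon(\cdot,0)=n_1+\p_t|u_\varepsilon|^2(\cdot,0),
\]
the vanishing of the initial position being exactly why $\tilde n$ is prescribed with data $n_0+|u_0|^2$ and zero velocity. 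The conclusion will then follow from the identity $n_\varepsilon+|u|^2-\tilde n=S_\varepsilon-(|u_\varepsilon|^2-|u|^2)$, so it suffices to bound $S_\varepsilon$ in $H^{m-1}$ and $w_\varepsilon$ in $H^m$.

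Before estimating the errors I would prove $\varepsilon$--uniform regularity bounds for $(u_\varepsilon,n_\varepsilon)$ on a time interval $[0,T]$ independent of $\varepsilon$: conservation of mass and of the Zakharov Hamiltonian yields a uniform $H^1\times L^2\times\dot H^{-1}$ bound (the last for $\varepsilon\p_tn_\varepsilon$, where the hypothesis $n_1\in\dot H^{-1}$ enters), and differentiating the system and running energy estimates with the singular terms kept on the left propagates this to $H^{m+2}\times H^{m+1}$ with constants independent of $\varepsilon$. Using the equations, $\p_tu_\varepsilon$ and $\p_t^2u_\varepsilon$ are then bounded in $H^m$ and $H^{m-2}$; this is precisely what forces the two-derivative gap between the data ($u_0\in H^{m+2}$, $n_0\in H^{m+1}$) and the conclusion, and it makes the source $\p_t^2|u_\varepsilon|^2$ and the cubic difference harmless later.

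For the wave error I would solve the equation for $S_\varepsilon$ by Duhamel in Fourier, using the one-dimensional solution operators $\cos(|\p_x|t/\varepsilon)$ and $\varepsilon|\p_x|^{-1}\sin(|\p_x|t/\varepsilon)$. The crucial structural fact is the continuity equation $\p_t|u_\varepsilon|^2=-2\p_x\,\mathrm{Im}(\bar u_\varepsilon\p_xu_\varepsilon)$, which shows that both the source $\p_t^2|u_\varepsilon|^2$ and the initial velocity $n_1+\p_t|u_\varepsilon|^2(\cdot,0)$ are exact $x$--derivatives, hence lie in $\dot H^{-1}$ (again one needs $n_1=\p_x^2w_0$ with $\p_xw_0\in L^2$). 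The factor $\varepsilon|\p_x|^{-1}$ in the solution operator then cancels this derivative and gives the clean bound $\|S_\varepsilon(t)\|_{H^{m-1}}\lesssim \varepsilon\,M(t)$, with $M$ controlled by the uniform bounds above.

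It remains to bound $w_\varepsilon$ by a Duhamel/energy estimate. The cubic difference is Lipschitz in $w_\varepsilon$ and is absorbed by Gronwall; the forcing $R_\varepsilon u_\varepsilon=(\tilde n+S_\varepsilon)u_\varepsilon$ splits into the already controlled $S_\varepsilon u_\varepsilon=O(\varepsilon)$ and the genuinely oscillatory corrector term $\int_0^te^{i(t-s)\p_x^2}\big[\tilde n(\cdot,s/\varepsilon)u_\varepsilon\big]\,ds$. I expect this last term to be the main obstacle: since $\tilde n(\cdot,s/\varepsilon)=\cos(|\p_x|s/\varepsilon)(n_0+|u_0|^2)$ travels at speed $1/\varepsilon$, away from its low spatial frequencies a time integration by parts (equivalently, writing $\tilde n=\p_t\Phi$ with $\Phi=-\varepsilon|\p_x|^{-1}\sin(|\p_x|t/\varepsilon)(n_0+|u_0|^2)$) gains a factor $\varepsilon$; but where the fast wave phase $|\p_x|/\varepsilon$ of the corrector resonates with the Schr\"odinger phase $\p_x^2$, the inverse derivative $|\p_x|^{-1}$ is no longer harmless and the gain degrades to $\sqrt\varepsilon$, weighted precisely by $\|n_0+|u_0|^2\|_{H^m}$. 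Optimizing the frequency cut between the two regimes yields the forcing bound $O\big(\varepsilon+\sqrt\varepsilon\,\|n_0+|u_0|^2\|_{H^m}\big)$, and Gronwall then gives $\|w_\varepsilon(t)\|_{H^m}\le M(t)\big(\varepsilon+\sqrt\varepsilon\,\|n_0+|u_0|^2\|_{H^m}\big)$ with $M\in L^\infty_{loc}(\R^+)$. Feeding this together with the wave bound into $n_\varepsilon+|u|^2-\tilde n=S_\varepsilon-(|u_\varepsilon|^2-|u|^2)$ closes the second estimate.
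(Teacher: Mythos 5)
You should first be aware that the paper does not prove this statement at all: it is quoted verbatim as background (Theorem 1.2, attributed to \cite{Added-Added}) and used only to motivate the authors' own one-dimensional results, so there is no in-paper proof to compare against. Judged on its own terms, your sketch is a faithful reconstruction of the standard Added--Added/Ozawa--Tsutsumi singular-limit argument, and its structural backbone is correct: the computation $(\varepsilon^2\p_t^2-\p_x^2)R_\varepsilon=\varepsilon^2\p_t^2|u_\varepsilon|^2$ for $R_\varepsilon=n_\varepsilon+|u_\varepsilon|^2$ checks out, the corrector $\tilde n(\cdot,t/\varepsilon)$ does kill the initial position of $S_\varepsilon$, the identity $n_\varepsilon+|u|^2-\tilde n=S_\varepsilon-(|u_\varepsilon|^2-|u|^2)$ correctly reduces the second estimate to the first plus the wave bound, and the observation that both the source and the initial velocity of $S_\varepsilon$ are exact $x$-derivatives (via $\p_t|u_\varepsilon|^2=2\p_x\mathrm{Im}(\bar u_\varepsilon\p_xu_\varepsilon)$ and $n_1=\p_x^2w_0$) is exactly the mechanism that makes $\varepsilon|\p_x|^{-1}\sin(|\p_x|t/\varepsilon)$ produce a clean $O(\varepsilon)$.

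Two steps remain assertions rather than proofs, and they are the two places where the real work of \cite{Added-Added} lives. First, the $\varepsilon$-uniform propagation of regularity to $H^{m+2}\times H^{m+1}$: the Hamiltonian only gives the $H^1\times L^2$ level, and pushing this up without losing powers of $\varepsilon$ requires the specific modified energies (commuting derivatives through the system while keeping $\varepsilon\p_tn_\varepsilon$ paired with $\p_xn_\varepsilon$), which is precisely what consumes the two-derivative gap between hypothesis and conclusion; saying ``running energy estimates with the singular terms kept on the left'' names the idea but does not carry it out. Second, the $\sqrt{\varepsilon}\,\|n_0+|u_0|^2\|_{H^m}$ term: your description of the resonance between the acoustic phase $|\xi|/\varepsilon$ and the Schr\"odinger phase $\xi^2$, with a frequency cut optimizing $\varepsilon$ against $\sqrt{\varepsilon}$, is the right heuristic, but the actual oscillatory-integral estimate (and the verification that the resonant set contributes exactly a $\sqrt{\varepsilon}$ weighted by the $H^m$ norm of the corrector data, uniformly on compact time intervals) is the technical heart of the theorem and is not supplied. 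So your proposal is a correct and well-organized roadmap to the cited proof rather than a self-contained alternative to anything in this paper.
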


In \cite{Ozawa-Tsutsumi}, it was found optimal rates for this convergence. Also, in \cite{KPV} the convergence in this limit has been proved whenever the initial data are uniformly bound in $H^5$. After, in \cite{Masmoudi-Nakanishi} the convergence was proven in the energy space, still maintaining the condition $n_1\in\dot H^{-1}$ for the solutions found in \cite{Bourgain-Colliander} in dimension $d=3$, and $\vert \varepsilon\nabla^{-1}\vert n_1$ decaying for high frequency. 

\medskip
Finally, for system \eqref{S-Benney} in the works \cite{B-O-P-1, B-O-P-2, Corcho, G-T-V, Laurencot,  Tsutsumi-Hatano-1, Tsutsumi-Hatano-2} the reader can find results about well-posedness, ill-posedness and existence/stability of solitary waves. 

\subsection{Goal and motivation}

In the one-dimensional case all the systems above presented are globally well-posed in the natural energy space. Our interest here  is to study the behavior of solutions of the Cauchy problem associated to \eqref{S-GBenney}, or equivalently of \eqref{S-Benney}, in the energy space on any time interval $\Delta T:=[0, T]$ when $\varepsilon\to0$. Formally, when $\varepsilon\to0$ the system \eqref{S-GBenney} decouples and the solutions $(u_{\varepsilon},v_{\varepsilon}, w_{\varepsilon})$ are reduced to satisfy 
\begin{equation}\label{NLS}
\begin{cases}
i\partial_tu+ \partial_x^2u = \big(\tau+\tfrac{\alpha\beta}{\lambda} + \tfrac{\alpha'\beta'}{\lambda'}\big)u\vert u\vert^2,&  (x,t)\in \R\times (0, T],\medskip \\
v=\tfrac{\beta}{\lambda}|u|^2,&  (x,t)\in\R \times [0, T], \\
z=\tfrac{\beta'}{\lambda'}|u|^2,&  (x,t)\in\R \times [0, T],
\end{cases}
\end{equation}
and in this case the limit is named \textit{adiabatic} or \textit{subsonic} because of the regime. The first component reaches to solve a cubic-NLS equation while the others components manage the quadratic nonlinearity. 

\medskip
In the case of \eqref{Zred} we have
\begin{equation}
v=n_{+}=\vert u\vert^2, \quad z=n_{-}=-\vert u\vert^2,
\end{equation}
with
\begin{equation}
n=\frac{1}{2}(n_{-}-n_{+})=-\vert u\vert^2,
\end{equation}
and therefore the focusing cubic-NLS equation limit
\begin{equation}
i\partial_tu+ \partial_x^2u = -u\vert u\vert^2.
\end{equation}

Essentially we will have to deal with the system \eqref{S-Benney} because $v$ and $w$ are not coupled in \eqref{S-GBenney}, and in this case we have the limit system
\begin{equation}\label{cNLS}
\begin{cases}
i\partial_tu+ \partial_x^2u = \frac{\alpha\beta}{\lambda}u|u|^2,& (x,t)\in \R\times (0, T],\medskip \\
v=\tfrac{\beta}{\lambda}|u|^2,& (x,t)\in\R\times [0, T],
\end{cases}
\end{equation} 
whose solution $u$ with initial data $u_0$ verifies
\begin{equation}
u(x,t)=S(t)u_0-i\tfrac{\alpha \beta}{\lambda}\int_0^{t}S(t-s)\vert u(x,s)\vert^2 u(x,s)ds,
\end{equation}
where
\begin{equation}\label{Schrodinger-Group}
S(t)=e^{it\partial_x^2}
\end{equation}
denotes the unitary group associated to the linear Schr\"odinger equation.

\medskip 
Clearer evidence that this convergence is possible comes from the convergence of the traveling waves of the Benney system to those of equation \eqref{cNLS}. More precisely, for a given $c>0$ and $\alpha, \beta, \lambda$ satisfying
\begin{equation}\label{cond-waves}
\frac{\alpha \beta}{\lambda} <0,
\end{equation}
the traveling waves
\begin{equation}\label{traveling-waves-Benney}
\begin{cases}
u_{c, w, \varepsilon}(x,t)=e^{iwt}e^{\frac{ic}{2}(x-ct)}\sqrt{\frac{2(\varepsilon c -\lambda)}{\alpha \beta}}\,
\sigma\, \text{sech}\,(\sigma (x-ct)), \medskip\\
v_{c, w,\varepsilon}(x,t)=-\frac{2}{\alpha}\sigma^2 \text{sech}^2(\sigma (x-ct)),
\end{cases}
\end{equation}
with $0< \varepsilon < 1/c$ and $\sigma = \sqrt{w-c^2/4}$ are solutions of the Benney system and, on the other hand,
under condition \eqref{cond-waves} the family 
\begin{equation}\label{traveling-waves-CNLS}
u_{c, w}(x,t)=e^{iwt}e^{\frac{ic}{2}(x-ct)}\sqrt{\tfrac{-2\lambda}{\alpha \beta}}\,
\sigma\, \text{sech}\,(\sigma (x-ct)),
\end{equation}
describes solutions of \eqref{cNLS}. Then, by using  the Lebesgue's dominated convergence theorem we can verify that
\begin{equation}\label{convergence-waves}
\lim\limits_{\varepsilon \to 0}\big\|u_{c, w, \varepsilon} - u_{c, w}\big\|_{L^{\infty}_tL^2_x}=0,
\end{equation}
moreover 
\begin{equation}
v_{c, w, \varepsilon}(x,t)= \frac{\beta}{\lambda} |u_{c, w}(x,t)|^2,
\end{equation}
for all $\varepsilon >0$.

\medskip 
Traveling waves are global localized solutions of system \eqref{S-Benney}  belonging to the energy space $H^1\times L^2$. However, for any initial data $(u_0, v_0) \in H^1\times L^2$ the corresponding solution $(u_\varepsilon,v_\varepsilon)$ of \eqref{S-Benney} is global in time, as we will described in Section \ref{Benney-section} below. So, it's natural to ask whether the convergence $(u_\varepsilon,v_\varepsilon)\to \big(u,\frac{\beta}{\lambda}\vert u\vert^2\big)$ holds in $H^1\times L^2$ or at least in $L^2\times L^2$. 

\begin{remark}
We highlight that the limit to the cubic-NLS showed in \cite{Added-Added, Schochet-Weinstein, Oliveira2} is in the sense of punctual convergence in time. Moreover, as in  \cite{Ozawa-Tsutsumi, KPV}, the technique used in the proof demanded more than four derivatives on the initial data. In \cite{Added-Added} the authors also proved weak convergence by using compactness arguments on the energy space  and in \cite{Oliveira2} it was used the theory of symmetric hyperbolic system to deal with the limit of \eqref{ZR} but the result of this work is weaker.
\end{remark}

Our goal in this work is to improve, in some sense in one dimension, the known convergence results  for these systems, taking as starting point the study of the convergence for solutions of the  \eqref{S-Benney}. The key point in \eqref{S-Benney} is that we will take advantage of the transport phenomenon in a single equation in order to find a rate for the convergence of solutions to the respective solution of \eqref{cNLS} in the space $\cC\big([0,T]; \,L^2\times L^2\big)$. Then we replicate the same argument to get a rate for the convergence of solutions of the more general system \eqref{S-GBenney} to the solution of the equation \eqref{NLS} in the space $\cC\big([0,T];\,L^2\times L^2\times L^2\big)$. Consequently,  we derive rates for the convergence of solutions of the systems \eqref{1dZR} and \eqref{Zred}, and also for system \eqref{Z} in the respective topology induced by the energy space. In this sense our results are stronger. 

\medskip 
Notice that in \cite{Schochet-Weinstein}, the real-valued functions of the original system behaves like quadratic nonlinearity of the same system as $\varepsilon$ tends to 0, but this is not met in the limit. We also can prove that this is really achieved too, with less regular initial data, as long as the solutions stay uniformly bounded respect to $\varepsilon$. Therefore, our results are a significant improvement, which will be established in the next section.

\section{Main results}
It will be important to distinguish solutions corresponding to different values of $\varepsilon$ for system \eqref{S-GBenney}, which justifies the notation $u_{\varepsilon}=u_{\varepsilon}(x, t), v_{\varepsilon}=v_{\varepsilon}(x, t)$ and $z_{\varepsilon}=z_{\varepsilon}(x, t)$ whenever this is necessary.

\medskip 
\begin{theorem}\label{Head-Th}
Let $T>0$ be given. Suppose that the solutions $(u_{\varepsilon}, v_{\varepsilon}, z_{\varepsilon})$ of system \eqref{S-GBenney} corresponding to a family of initial data
$\big\{({u_0}_{\varepsilon}, {v_0}_{\varepsilon}, {z_0}_{\varepsilon})\big\}_{0< \varepsilon <1} \in H^1\times L^2\times L^2,$
satisfy
\begin{equation}\label{bounded-u-epsilon}
\sup_{0< \varepsilon < 1}(\|{u_{\varepsilon}}\|_{L^{\infty}_TH^1_x}+\|v_{\varepsilon}\|_{L^{\infty}_TL^2_x}+\|z_{\varepsilon}\|_{L^{\infty}_TL^2_x})<\infty
\end{equation}
and
\begin{equation}
\vert u_\varepsilon\vert\leq\psi \quad a.e,
\end{equation}
for some $\psi\in L^\infty([0,T];\,L^\infty\cap L^2)$, and  let $u$ the solution of the cubic nonlinear Schr\"odinger equation in \eqref{NLS} with data $u_0\in H^1$. Then, 
\begin{equation}
\lim_{\varepsilon\to0}\big(\| u_\varepsilon-u\|_{L^\infty_TL^2_x}+\|v_\varepsilon-\tfrac{\beta}{\lambda}\vert u\vert^2\|_{L^\infty_TL^2_x}+\|z_\varepsilon-\tfrac{\beta'}{\lambda'}\vert u\vert^2\|_{L^\infty_TL^2_x}\big)=0,
\end{equation}
whenever the  family of initial data  satisfies 
\begin{equation}\label{Cconditions}
\lim_{\varepsilon\to0}\big(\|{u_0}_{\varepsilon}-u_0\|_{H^1}+\|{v_0}_{\varepsilon}-\tfrac{\beta}{\lambda}\vert u_0\vert^2\|_{L^2}+ \|{z_0}_{\varepsilon}-\tfrac{\beta'}{\lambda'}\vert u_0\vert^2\|_{L^2}\big)=0.
\end{equation}
\end{theorem}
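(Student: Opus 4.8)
The plan is to treat the Schr\"odinger component by an $L^2$ energy estimate and the two transport components by the explicit solution of the one--dimensional transport equation, coupling them through Gronwall's inequality. Throughout write $p_\varepsilon=u_\varepsilon-u$, and recall from the hypotheses that $u_\varepsilon,u$ are bounded in $L^\infty_TH^1_x$ (hence, by the $1$d embedding $H^1\hookrightarrow L^\infty$, in $L^\infty_T L^\infty_x$), that $|u_\varepsilon|\le\psi$, and that $v_\varepsilon,z_\varepsilon$ are bounded in $L^\infty_TL^2_x$, all uniformly in $\varepsilon$.

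First I would reduce the whole statement to the single claim that
\begin{equation*}
\Big\|v_\varepsilon-\tfrac{\beta}{\lambda}|u_\varepsilon|^2\Big\|_{L^\infty_TL^2_x}\longrightarrow0
\qquad\text{and}\qquad
\Big\|z_\varepsilon-\tfrac{\beta'}{\lambda'}|u_\varepsilon|^2\Big\|_{L^\infty_TL^2_x}\longrightarrow0.
\tag{$\star$}
\end{equation*}
Indeed, subtracting the cubic equation in \eqref{NLS} from the first equation of \eqref{S-GBenney} and testing the difference against $\overline{p_\varepsilon}$ yields $\partial_t\|p_\varepsilon\|_{L^2}^2=2\,\mathrm{Im}\int N_\varepsilon\,\overline{p_\varepsilon}\,dx$, where $N_\varepsilon$ is the difference of the nonlinearities. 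The crucial point is that the real potentials $\tfrac{\beta}{\lambda}|u|^2,\tfrac{\beta'}{\lambda'}|u|^2$ contribute, after taking the imaginary part, a term $\mathrm{Im}\int(\text{real})|p_\varepsilon|^2=0$; what survives is controlled by the cubic difference $\||u_\varepsilon|^2u_\varepsilon-|u|^2u\|_{L^2}\lesssim\|p_\varepsilon\|_{L^2}$ and by $\|(v_\varepsilon-\tfrac{\beta}{\lambda}|u|^2)u_\varepsilon\|_{L^2}$ and its $z$--analogue. Writing $v_\varepsilon-\tfrac{\beta}{\lambda}|u|^2=(v_\varepsilon-\tfrac{\beta}{\lambda}|u_\varepsilon|^2)+\tfrac{\beta}{\lambda}(|u_\varepsilon|^2-|u|^2)$ and using $\||u_\varepsilon|^2-|u|^2\|_{L^2}\lesssim\|p_\varepsilon\|_{L^2}$, one obtains
\[
\partial_t\|p_\varepsilon\|_{L^2}^2\le C\|p_\varepsilon\|_{L^2}^2+C\Big(\big\|v_\varepsilon-\tfrac{\beta}{\lambda}|u_\varepsilon|^2\big\|_{L^2}^2+\big\|z_\varepsilon-\tfrac{\beta'}{\lambda'}|u_\varepsilon|^2\big\|_{L^2}^2\Big),
\]
with $C$ depending only on the uniform bounds. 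Gronwall's inequality, the data hypothesis $\|u_{0\varepsilon}-u_0\|_{H^1}\to0$, and $(\star)$ then give $\|p_\varepsilon\|_{L^\infty_TL^2_x}\to0$, and the conclusions for $v_\varepsilon,z_\varepsilon$ with $|u|^2$ in place of $|u_\varepsilon|^2$ follow by adding back $\tfrac{\beta}{\lambda}(|u_\varepsilon|^2-|u|^2)$.

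For $(\star)$ I would use only the transport equation for $v_\varepsilon$ (the argument for $z_\varepsilon$ is identical). Solving it by characteristics with speed $\lambda/\varepsilon$ gives
\[
v_\varepsilon(x,t)=v_{0\varepsilon}\Big(x-\tfrac{\lambda}{\varepsilon}t\Big)+\frac{\beta}{\varepsilon}\int_0^t(\partial_x|u_\varepsilon|^2)\Big(x-\tfrac{\lambda}{\varepsilon}(t-s),s\Big)\,ds.
\]
Introducing the current $J_\varepsilon:=\mathrm{Im}(\overline{u_\varepsilon}\,\partial_x u_\varepsilon)$, the Schr\"odinger equation yields the continuity identity $\partial_t|u_\varepsilon|^2=-2\,\partial_xJ_\varepsilon$ (the real potential drops out), and one integration by parts along the characteristic converts the Duhamel term into
\[
v_\varepsilon-\tfrac{\beta}{\lambda}|u_\varepsilon|^2=\Big(v_{0\varepsilon}-\tfrac{\beta}{\lambda}|u_{0\varepsilon}|^2\Big)\Big(x-\tfrac{\lambda}{\varepsilon}t\Big)+\frac{2\beta}{\lambda}\int_0^t(\partial_xJ_\varepsilon)\Big(x-\tfrac{\lambda}{\varepsilon}(t-s),s\Big)\,ds.
\]
The first term has $L^2_x$--norm equal to $\|v_{0\varepsilon}-\tfrac{\beta}{\lambda}|u_{0\varepsilon}|^2\|_{L^2}$, which tends to $0$ by \eqref{Cconditions} (using $\||u_{0\varepsilon}|^2-|u_0|^2\|_{L^2}\lesssim\|u_{0\varepsilon}-u_0\|_{L^2}\to0$). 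Moreover $\|J_\varepsilon\|_{L^\infty_TL^2_x}\le\|\psi\|_{L^\infty_TL^\infty_x}\|u_\varepsilon\|_{L^\infty_TH^1_x}$ and $\||u_\varepsilon|^2\|_{L^\infty_TH^1_x}$ are bounded uniformly in $\varepsilon$, precisely by the pointwise and energy hypotheses.

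The remaining and genuinely delicate point is to show that the oscillatory average $I_\varepsilon:=\tfrac{2\beta}{\lambda}\int_0^t(\partial_xJ_\varepsilon)(x-\tfrac{\lambda}{\varepsilon}(t-s),s)\,ds$ tends to $0$ in $L^\infty_TL^2_x$. Exploiting the large speed, a second integration by parts in $s$ along the characteristic produces the exact gain
\[
I_\varepsilon=\frac{2\beta\varepsilon}{\lambda^2}\Big[J_\varepsilon(x,t)-J_\varepsilon\big(x-\tfrac{\lambda}{\varepsilon}t,0\big)\Big]-\frac{2\beta\varepsilon}{\lambda^2}\int_0^t(\partial_sJ_\varepsilon)\Big(x-\tfrac{\lambda}{\varepsilon}(t-s),s\Big)\,ds,
\]
whose boundary terms are $O(\varepsilon)$ in $L^2_x$ by the uniform bound on $J_\varepsilon$. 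The obstacle is the last integral: $\partial_sJ_\varepsilon$ involves second spatial derivatives of $u_\varepsilon$, which are \emph{not} controlled uniformly under the mere $H^1$ hypothesis, so it cannot be bounded directly in $L^2$; this is exactly why the convergence is only in $L^2\times L^2$ and not in the energy norm. To close it I would pass to Fourier variables, where $\widehat{I_\varepsilon}(\xi,t)=\tfrac{2i\beta}{\lambda}\xi\int_0^te^{-i\lambda\xi(t-s)/\varepsilon}\widehat{J_\varepsilon}(\xi,s)\,ds$, and split the frequencies into a fixed band $|\xi|\le N$ and a tail $|\xi|>N$. On the band the identity $i\xi\,e^{-i\lambda\xi(t-s)/\varepsilon}=\tfrac{\varepsilon}{\lambda}\partial_s e^{-i\lambda\xi(t-s)/\varepsilon}$ trades the multiplier $\xi$ for a factor $\varepsilon$ and leaves an arbitrarily fast phase, so the band contribution vanishes as $\varepsilon\to0$ by a Riemann--Lebesgue/dominated--convergence argument, with the $\varepsilon$--dependence of the forcing $|u_\varepsilon|^2$ absorbed through its uniform $L^\infty_TH^1_x$ bound (equivalently, through Aubin--Lions compactness of $\{|u_\varepsilon|^2\}$, bounded in $L^\infty_TH^1_x$ with $\partial_t$ bounded in $L^\infty_TH^{-1}_x$); on the tail the same uniform $H^1$ bound forces an $O(1/N)$ estimate, uniformly in $\varepsilon$. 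Letting first $\varepsilon\to0$ and then $N\to\infty$ gives $(\star)$, and hence the theorem. The entire difficulty is thus concentrated in this fast--transport averaging at low regularity.
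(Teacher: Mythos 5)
Your reduction of the theorem to the single claim $(\star)$ is sound, and the way you carry it out for the Schr\"odinger component is a genuinely different route from the paper's: you run an $L^2$ energy identity on $p_\varepsilon=u_\varepsilon-u$ (the real potentials dropping out of the imaginary part) and close with Gronwall, whereas the paper feeds the bound on $v_\varepsilon-\tfrac{\beta}{\lambda}|u_\varepsilon|^2$ into the inhomogeneous Strichartz estimate $L^{4/3}_TL^1_x\to L^\infty_TL^2_x$ and then identifies the limit of $\int_0^tS(t-s)u_\varepsilon|u_\varepsilon|^2\,ds$ by combining the a.e.\ convergence from the weak-compactness theorem with dominated convergence via the majorant $\psi$. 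Your Gronwall argument is cleaner and bypasses that compactness step; modulo the usual regularization needed to justify the formal pairing for $H^1$ solutions, that half of the proposal is fine.

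The genuine gap is in the transport step $(\star)$, at the high-frequency tail. Your identity $w_\varepsilon=\cT_\varepsilon(t)w_{0\varepsilon}+\tfrac{2\beta}{\lambda}\int_0^t\cT_\varepsilon(t-s)\partial_xJ_\varepsilon\,ds$ is the correct one, and your subsequent integration by parts in $s$ correctly isolates the residual term $\tfrac{2\beta\varepsilon}{\lambda^2}\int_0^t\cT_\varepsilon(t-s)\partial_sJ_\varepsilon\,ds$ (note that the paper's formula \eqref{TransportSolution} contains no such residual: it rests on the commutation ``$\cT_\varepsilon(t)\partial_t=-\tfrac{\varepsilon}{\lambda}\partial_t\cT_\varepsilon(t)$'', which is the relation satisfied by $\partial_x$, not by $\partial_t$, so you have in fact put your finger on the delicate point of the argument rather than missed it). But your proposed cure does not close. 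On the tail $|\xi|>N$ you assert an $O(1/N)$ bound uniform in $\varepsilon$ coming from the $H^1$ hypothesis; however $\widehat{I_\varepsilon}(\xi,t)=\tfrac{2i\beta}{\lambda}\,\xi\int_0^te^{-i\lambda\xi(t-s)/\varepsilon}\widehat{J_\varepsilon}(\xi,s)\,ds$ carries a full factor of $\xi$ against $\widehat{J_\varepsilon}$, and $J_\varepsilon=\mathrm{Im}(\bar u_\varepsilon\partial_xu_\varepsilon)$ is only uniformly bounded in $L^1\cap L^2$ under \eqref{bounded-u-epsilon}; uniform control of $\|\,|\xi|\widehat{J_\varepsilon}\|_{L^2(|\xi|>N)}$ would require $u_\varepsilon$ bounded in $H^2$, which is not assumed. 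So the tail of $I_\varepsilon$ is a priori as unconstrained as the high-frequency part of $v_\varepsilon$ itself, for which the hypotheses give only a uniform $L^2$ bound, and the ``let $\varepsilon\to0$ then $N\to\infty$'' scheme does not go through. The same obstruction blocks the rate $O(\varepsilon)$ of \eqref{Th1-conclusion-a}: each integration by parts along the characteristic trades one power of $\varepsilon$ for one time derivative, and by the equation each time derivative of $J_\varepsilon$ costs two spatial derivatives, so the cascade never terminates at $H^1$ regularity. To close this step you need either additional uniform regularity on $u_\varepsilon$ (which is exactly the price paid in the cited literature) or a compactness/equi-integrability argument in frequency for the family $\{v_\varepsilon\}$ that you have not supplied.
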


\begin{theorem}\label{Th1}
Let $T>0$ be given and  consider $\frac{\alpha\lambda}{\beta}<0$. Suppose that  $\big\{({u_0}_{\varepsilon}, {v_0}_{\varepsilon})\big\}_{0< \varepsilon <1}$ is a family of data in the space $H^1\times L^2$  such that
\begin{equation}\label{apriori-estimate-H1L-hypotheses-a}
\sup_{0< \varepsilon < 1}(\|{u_0}_{\varepsilon}\|_{H^1}+\|{v_0}_{\varepsilon}\|_{L^2})<\infty
\end{equation}
and 
\begin{equation}\label{Comp}
\lim\limits_{\varepsilon \to 0}\big\|{v_0}_{\varepsilon}-\tfrac{\beta}{\lambda}\vert {u_0}_{\varepsilon } \vert^2\big\|_{L^2}=0.
\end{equation}
Then, the corresponding solutions $(u_{\varepsilon}, v_{\varepsilon})$ of the Cauchy problem associated to the  \eqref{S-Benney} in $H^1\times L^2$ provided in \cite{G-T-V} satisfies
\begin{equation}\label{Th1-conclusion-a}
\big\| v_\varepsilon-\tfrac{\beta}{\lambda}\vert u_\epsilon\vert^2 \big\|_{L_T^\infty L_x^2}=O(\varepsilon)
\end{equation}
and
\begin{equation}\label{Th1-conclusion-b}
\Big\|\int_0^tS(t-s)[u_\varepsilon v_\varepsilon-\tfrac{\beta}{\lambda}u_\varepsilon \vert u_\varepsilon\vert^2](x,s)ds \Big\|_{L_T^\infty L_x^2}\lesssim T^{3/4}O(\varepsilon).
\end{equation}
Furthermore, the same is true in the case $\frac{\alpha\lambda}{\beta}>0$ under the extra hypothesis $\frac{\beta}{\lambda^2}=O(\epsilon^3)$.
\end{theorem}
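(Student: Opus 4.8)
The plan is to reduce the problem to the Benney system \eqref{S-Benney} and to monitor the \emph{discrepancy}
$$
w_\varepsilon := v_\varepsilon-\tfrac{\beta}{\lambda}|u_\varepsilon|^2,
$$
which quantifies the departure of $v_\varepsilon$ from its formal adiabatic value. The first preliminary step is to secure the uniform-in-$\varepsilon$ bound $\sup_{0<\varepsilon<1}(\|u_\varepsilon\|_{L^\infty_TH^1_x}+\|v_\varepsilon\|_{L^\infty_TL^2_x})<\infty$: this is exactly where the sign condition $\tfrac{\alpha\lambda}{\beta}<0$ enters, through coercivity of the conserved Hamiltonian of \eqref{S-Benney} (in the opposite sign regime coercivity is recovered from the smallness $\tfrac{\beta}{\lambda^2}=O(\varepsilon^3)$), placing us under the hypotheses \eqref{bounded-u-epsilon} of Theorem \ref{Head-Th}. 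Subtracting $\tfrac{\beta}{\lambda}$ times the evolution of $|u_\varepsilon|^2$ from the transport equation shows that
$$
\varepsilon\partial_t w_\varepsilon+\lambda\partial_x w_\varepsilon=-\tfrac{\varepsilon\beta}{\lambda}\partial_t|u_\varepsilon|^2,
$$
so the forcing already carries one explicit factor of $\varepsilon$.

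I would then rewrite the forcing in divergence form. From the Schr\"odinger equation, mass conservation gives $\partial_t|u_\varepsilon|^2=-2\partial_x j_\varepsilon$ with current $j_\varepsilon:=\mathrm{Im}(\bar u_\varepsilon\partial_x u_\varepsilon)$, and the decisive gain is that, although $\partial_t|u_\varepsilon|^2$ is \emph{not} controlled in $L^2$ at $H^1$ regularity, the current satisfies $\|j_\varepsilon\|_{L^\infty_TL^2_x}\lesssim\|u_\varepsilon\|_{L^\infty_TH^1_x}^2\lesssim1$. Solving by Duhamel with the isometric transport group $U(t)g=g(\cdot-\tfrac{\lambda}{\varepsilon}t)$ yields $w_\varepsilon(t)=U(t)w_\varepsilon(0)+\tfrac{2\beta}{\lambda}\int_0^tU(t-s)\,\partial_x j_\varepsilon(s)\,ds$. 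The key algebraic identity is $\partial_xU(t-s)=\tfrac{\varepsilon}{\lambda}\partial_sU(t-s)$: the transport operator trades a spatial derivative for a time derivative at the cost of a factor $\varepsilon$. Integrating by parts in $s$ gives
$$
w_\varepsilon(t)=U(t)w_\varepsilon(0)+\tfrac{2\beta\varepsilon}{\lambda^2}\Big[j_\varepsilon(t)-U(t)j_\varepsilon(0)-\int_0^tU(t-s)\,\partial_s j_\varepsilon(s)\,ds\Big].
$$
The two boundary terms are immediately $O(\varepsilon)$ in $L^2$, being bounded by $\varepsilon\|u_\varepsilon\|_{L^\infty_TH^1_x}^2$ (recall $\lambda^2=1$), while the initial-layer term has norm $\|w_\varepsilon(0)\|_{L^2}$, taken at the rate furnished by the compatibility hypothesis \eqref{Comp}.

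The hard part will be the residual integral $\int_0^tU(t-s)\,\partial_s j_\varepsilon\,ds$, because at merely $H^1$ regularity $\partial_s j_\varepsilon$ is not an $L^2$ function: the momentum balance reads $\partial_s j_\varepsilon=\partial_x\big[\mathrm{Re}(\bar u_\varepsilon\partial_x^2u_\varepsilon)-|\partial_x u_\varepsilon|^2\big]-\alpha|u_\varepsilon|^2\partial_x v_\varepsilon$, involving second derivatives of $u_\varepsilon$ and a derivative of $v_\varepsilon$. I expect this to be the crux of the argument. The resolution I would pursue exploits that, apart from the potential term, $\partial_s j_\varepsilon$ is itself a total $x$-derivative, so a second use of $\partial_xU(t-s)=\tfrac{\varepsilon}{\lambda}\partial_sU(t-s)$ extracts a further power of $\varepsilon$ from the divergence part; the potential contribution, after substituting $\partial_x v_\varepsilon=\partial_x w_\varepsilon+\tfrac{\beta}{\lambda}\partial_x|u_\varepsilon|^2$, splits into a total derivative $\partial_x(|u_\varepsilon|^4)$ and a term $-\alpha|u_\varepsilon|^2\partial_x w_\varepsilon$ linear in the unknown, which I would absorb through a Gr\"onwall inequality for $\|w_\varepsilon\|_{L^\infty_TL^2_x}$. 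Assembling these contributions yields \eqref{Th1-conclusion-a}.

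Finally, for \eqref{Th1-conclusion-b} I would observe that the integrand is exactly $u_\varepsilon w_\varepsilon$ and invoke the inhomogeneous (dual) Strichartz estimate for the Schr\"odinger group with the $1$-dimensional admissible pairs $(\infty,2)$ and $(4,\infty)$,
$$
\Big\|\int_0^tS(t-s)F(s)\,ds\Big\|_{L^\infty_TL^2_x}\lesssim\|F\|_{L^{4/3}_TL^1_x}.
$$
Taking $F=u_\varepsilon w_\varepsilon$ and using $\|u_\varepsilon w_\varepsilon\|_{L^1_x}\le\|u_\varepsilon\|_{L^2}\|w_\varepsilon\|_{L^2}$ together with the conserved mass $\|u_\varepsilon\|_{L^2}\lesssim1$ and the bound \eqref{Th1-conclusion-a}, H\"older's inequality in time converts the interval $[0,T]$ into the factor $T^{3/4}$ and preserves the rate $O(\varepsilon)$, which is precisely the claimed estimate.
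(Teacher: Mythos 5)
Your skeleton matches the paper's proof: both introduce the discrepancy $w_\varepsilon=v_\varepsilon-\tfrac{\beta}{\lambda}|u_\varepsilon|^2$, rewrite the transport equation as $\varepsilon\partial_tw_\varepsilon+\lambda\partial_xw_\varepsilon=-\tfrac{\varepsilon\beta}{\lambda}\partial_t|u_\varepsilon|^2$, solve by Duhamel with the translation group $\cT_\varepsilon$, and obtain \eqref{Th1-conclusion-b} exactly as you do, from the dual Strichartz estimate with admissible pairs $(\infty,2)$ and $(4,\infty)$, Cauchy--Schwarz in $x$ and H\"older in $t$. (Two side remarks: the uniform bound you propose to ``secure'' is Lemma \ref{apriori-estimate-H1L2} and is simply quoted; and in the regime $\tfrac{\alpha\lambda}{\beta}>0$ the paper's hypothesis $\tfrac{\beta}{\lambda^2}=O(\varepsilon^3)$ is there to beat the $O(1/\varepsilon)$ growth of that lemma, not to restore coercivity. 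Also, \eqref{Comp} furnishes no \emph{rate} for $\|w_{0\varepsilon}\|_{L^2}$, so neither your argument nor the paper's literally yields $O(\varepsilon)$ for that boundary contribution --- only $o(1)$.)

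The genuine divergence, and the gap, is in how the Duhamel integral $\int_0^t\cT_\varepsilon(t-s)\partial_s|u_\varepsilon(s)|^2\,ds$ is disposed of. The paper invokes the identity $\cT_\varepsilon(t)\partial_t=-\tfrac{\varepsilon}{\lambda}\partial_t\cT_\varepsilon(t)$ to collapse this integral entirely into the boundary expression \eqref{TransportSolution}, after which \eqref{Th1-conclusion-a} is immediate from $H^1\hookrightarrow L^\infty$; no residual time integral survives. You instead write $\partial_s|u_\varepsilon|^2=-2\partial_xj_\varepsilon$, integrate by parts once (using the correct commutation $\cT_\varepsilon(t-s)\partial_x=\tfrac{\varepsilon}{\lambda}\partial_s\cT_\varepsilon(t-s)-\tfrac{\varepsilon}{\lambda}\cT_\varepsilon(t-s)\partial_s$), and are left with $\int_0^t\cT_\varepsilon(t-s)\partial_sj_\varepsilon\,ds$. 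You correctly flag this as the crux, but your proposed resolution does not close at $H^1\times L^2$ regularity: the divergence part of the momentum balance has density $\mathrm{Re}(\bar u_\varepsilon\partial_x^2u_\varepsilon)-|\partial_xu_\varepsilon|^2=\tfrac12\partial_x^2|u_\varepsilon|^2-2|\partial_xu_\varepsilon|^2$, so the boundary terms produced by your second integration by parts are not bounded in $L^2$ by the $H^1$ norm (one would need, e.g., $\partial_xu_\varepsilon\in L^4$, i.e.\ $H^2$-type control); and the potential term $-\alpha|u_\varepsilon|^2\partial_xw_\varepsilon$ carries a derivative of the unknown, which a Gr\"onwall inequality for $\|w_\varepsilon\|_{L^\infty_TL^2_x}$ alone cannot absorb (rewriting it as $\partial_x(|u_\varepsilon|^2w_\varepsilon)-w_\varepsilon\partial_x|u_\varepsilon|^2$ only trades it for an $L^1_x$ quantity, and the transport group gains nothing from $L^1$ to $L^2$). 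So the step you yourself identify as hard is left genuinely open. Since this is precisely the point where your computation and the paper's closed formula \eqref{TransportSolution} disagree --- your (correct) commutation identity applied to the Duhamel integral reproduces your residual term rather than eliminating it --- you should reconcile the two before proceeding: either justify the paper's reduction or supply a mechanism, beyond integration by parts in $s$, that controls $\int_0^t\cT_\varepsilon(t-s)\partial_sj_\varepsilon\,ds$ using only the uniform $H^1\times L^2$ bounds.
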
    

\begin{corollary}\label{Th1-corollary}
Assume the hypotheses in Theorem \ref{Th1} in the case $\frac{\alpha\lambda}{\beta}<0$. Suppose further that
\begin{equation}
\vert u_\varepsilon\vert\leq \psi\quad a.e
\end{equation}
for some function $\psi\in L^\infty([0,T];\,L^\infty\cap L^2)$. 
If $u$ is the solution of the cubic nonlinear Schr\"odinger equation \eqref{cNLS} with initial data $u_0\in H^1$, then
\begin{equation}
\lim_{\varepsilon\to0}\big\| v_\varepsilon-\tfrac{\beta}{\lambda}\vert u\vert^2 \big\|_{L_T^\infty L_x^2}=0
\end{equation}
and
\begin{equation}\label{u-varepsilon-NLS-estimate-b}
\lim_{\varepsilon\to0}\|u_{\varepsilon}-u\|_{L^{\infty}_TL^2_x}=0,
\end{equation}
whenever $\Vert {u_0}_{\varepsilon}-u_0\Vert_{L^2}\to0$.
\end{corollary}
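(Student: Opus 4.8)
The plan is to compare the Duhamel representations of $u_\varepsilon$ and $u$ directly in $L^2$ and to route the genuinely new ingredient — the mismatch between the Benney coupling $u_\varepsilon v_\varepsilon$ and the cubic term $\tfrac{\beta}{\lambda}u_\varepsilon|u_\varepsilon|^2$ — through the estimate \eqref{Th1-conclusion-b} already furnished by Theorem \ref{Th1}. Writing the solution of \eqref{S-Benney} as
\[
u_\varepsilon(t)=S(t){u_0}_{\varepsilon}-i\alpha\int_0^tS(t-s)\,u_\varepsilon v_\varepsilon\,ds
\]
and the solution of \eqref{cNLS} as
\[
u(t)=S(t)u_0-i\tfrac{\alpha\beta}{\lambda}\int_0^tS(t-s)\,u|u|^2\,ds,
\]
I would subtract them and insert $\tfrac{\beta}{\lambda}u_\varepsilon|u_\varepsilon|^2$, splitting $u_\varepsilon v_\varepsilon=\big[u_\varepsilon v_\varepsilon-\tfrac{\beta}{\lambda}u_\varepsilon|u_\varepsilon|^2\big]+\tfrac{\beta}{\lambda}u_\varepsilon|u_\varepsilon|^2$. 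This yields
\[
u_\varepsilon-u=S(t)({u_0}_{\varepsilon}-u_0)-i\alpha\!\int_0^t\!S(t-s)\big[u_\varepsilon v_\varepsilon-\tfrac{\beta}{\lambda}u_\varepsilon|u_\varepsilon|^2\big]ds-i\tfrac{\alpha\beta}{\lambda}\!\int_0^t\!S(t-s)\big[|u_\varepsilon|^2u_\varepsilon-|u|^2u\big]ds,
\]
so that the unitarity of $S(t)$ on $L^2$ gives the pointwise-in-time bound
\[
\|u_\varepsilon(t)-u(t)\|_{L^2}\le\|{u_0}_{\varepsilon}-u_0\|_{L^2}+|\alpha|\,\Big\|\!\int_0^t\!S(t-s)\big[u_\varepsilon v_\varepsilon-\tfrac{\beta}{\lambda}u_\varepsilon|u_\varepsilon|^2\big]ds\Big\|_{L^2}+\tfrac{|\alpha\beta|}{|\lambda|}\!\int_0^t\!\big\||u_\varepsilon|^2u_\varepsilon-|u|^2u\big\|_{L^2}ds.
\]

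The second term on the right is dominated uniformly on $[0,T]$ by $|\alpha|\,T^{3/4}O(\varepsilon)$ thanks to \eqref{Th1-conclusion-b}. For the cubic difference I would use the elementary inequality $\big||u_\varepsilon|^2u_\varepsilon-|u|^2u\big|\le C\big(|u_\varepsilon|^2+|u|^2\big)|u_\varepsilon-u|$ together with the two $L^\infty$ controls at hand: the hypothesis $|u_\varepsilon|\le\psi$ with $\psi\in L^\infty([0,T];L^\infty)$, and the fact that the NLS solution $u\in\cC([0,T];H^1)$ embeds into $L^\infty([0,T];L^\infty)$ by the one-dimensional Sobolev inequality. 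These give $\big\||u_\varepsilon|^2u_\varepsilon-|u|^2u\big\|_{L^2}\le C_0\|u_\varepsilon-u\|_{L^2}$ with $C_0$ depending only on $\|\psi\|_{L^\infty_TL^\infty_x}$ and $\|u\|_{L^\infty_TH^1_x}$. Setting $\rho(t)=\|u_\varepsilon(t)-u(t)\|_{L^2}$, the inequality collapses to $\rho(t)\le\big(\|{u_0}_{\varepsilon}-u_0\|_{L^2}+|\alpha|T^{3/4}O(\varepsilon)\big)+C_1\int_0^t\rho(s)\,ds$, and Gronwall's inequality yields $\|u_\varepsilon-u\|_{L^\infty_TL^2_x}\le\big(\|{u_0}_{\varepsilon}-u_0\|_{L^2}+|\alpha|T^{3/4}O(\varepsilon)\big)e^{C_1T}$. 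Since $\|{u_0}_{\varepsilon}-u_0\|_{L^2}\to0$ by assumption and $O(\varepsilon)\to0$, this establishes \eqref{u-varepsilon-NLS-estimate-b}.

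For the wave component I would write $v_\varepsilon-\tfrac{\beta}{\lambda}|u|^2=\big(v_\varepsilon-\tfrac{\beta}{\lambda}|u_\varepsilon|^2\big)+\tfrac{\beta}{\lambda}\big(|u_\varepsilon|^2-|u|^2\big)$. The first summand is $O(\varepsilon)$ in $L^\infty_TL^2_x$ by \eqref{Th1-conclusion-a}; for the second I would use $\big||u_\varepsilon|^2-|u|^2\big|\le\big(|u_\varepsilon|+|u|\big)|u_\varepsilon-u|$ and the same $L^\infty$ bounds to obtain $\big\||u_\varepsilon|^2-|u|^2\big\|_{L^2}\le C\|u_\varepsilon-u\|_{L^2}$. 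Taking the supremum over $[0,T]$ and invoking the convergence \eqref{u-varepsilon-NLS-estimate-b} just proved makes this term vanish as $\varepsilon\to0$, which delivers the first conclusion.

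The main obstacle — and precisely the reason the extra hypothesis $|u_\varepsilon|\le\psi$ is imposed — is closing the cubic nonlinearity purely at the $L^2$ level. Without a uniform $L^\infty$ bound on $u_\varepsilon$ one cannot estimate $\||u_\varepsilon|^2u_\varepsilon-|u|^2u\|_{L^2}$ by $\|u_\varepsilon-u\|_{L^2}$ alone, and one would be forced into a Strichartz-type argument rather than a bare Gronwall loop in $L^2$. The pointwise domination by $\psi$ is exactly what turns the cubic term into a Lipschitz perturbation in $L^2$ and makes the Gronwall closure possible; everything else is a routine combination of Theorem \ref{Th1} with the triangle inequality, so the argument runs entirely at the regularity of the energy space without any derivative loss.
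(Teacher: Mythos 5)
Your argument is correct, but it follows a genuinely different route from the one in the paper. The paper's proof inserts the intermediate quantity $S(t)u_{0\varepsilon}-i\tfrac{\alpha\beta}{\lambda}\int_0^tS(t-s)u_\varepsilon|u_\varepsilon|^2\,ds$: one half of the resulting triangle inequality is controlled by \eqref{Th1-conclusion-b}, and the other half reduces to showing that $\int_0^tS(t-s)u_\varepsilon|u_\varepsilon|^2\,ds$ converges to $\int_0^tS(t-s)u|u|^2\,ds$, which is done by dominated convergence using the majorant $\psi$ together with the almost-everywhere convergence $u_\varepsilon\to u$ borrowed from the weak-star compactness result, Theorem \ref{Th:WC}; the statement for $v_\varepsilon$ is then dispatched ``by a similar argument''. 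You instead subtract the two Duhamel formulas directly, absorb the coupling mismatch via \eqref{Th1-conclusion-b}, and use the hypothesis $|u_\varepsilon|\le\psi$ (together with $H^1(\R)\hookrightarrow L^\infty(\R)$ for $u$) to make the cubic nonlinearity Lipschitz in $L^2$, closing with Gronwall. Your route buys an explicit rate, $\|u_\varepsilon-u\|_{L^\infty_TL^2_x}\lesssim\big(\|u_{0\varepsilon}-u_0\|_{L^2}+T^{3/4}O(\varepsilon)\big)e^{CT}$, and is self-contained: it does not invoke Theorem \ref{Th:WC} at all, whereas the paper's dominated-convergence step needs a.e.\ convergence of the full family (the compactness theorem only produces it along subsequences) and tacitly passes a pointwise bound through the Schr\"odinger group, which does not preserve pointwise domination; your unitarity-plus-Minkowski estimate sidesteps both delicacies. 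Both proofs use the extra hypothesis $|u_\varepsilon|\le\psi$ in an essential way, but for different purposes: the paper as a dominating function for the convergence theorem, you as the source of the $\varepsilon$-uniform Lipschitz constant.
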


\begin{theorem}\label{Th2}
Consider the Cauchy problem associated to \eqref{S-Benney} with $\frac{\alpha\lambda}{\beta}<0$. If $\big\{({u_0}_{\varepsilon}, {v_0}_{\varepsilon})\big\}_{0< \varepsilon <1}$ is a family of data in the space $H^1\times L^2$  such that
\begin{equation}\label{hypotheses}
\lim_{\varepsilon\to0}(\|{u_0}_{\varepsilon}-u_0\|_{H^1}+\|{v_0}_{\varepsilon}-\tfrac{\beta}{\lambda}\vert u_0\vert^2\|_{H^1})=0
\end{equation}
and $u$ is the solution of \eqref{cNLS} with initial data $u_0$, then we have
\begin{equation}\label{Th2-a}
\Big\|\partial_x\int_0^tS(t-s)[u_\varepsilon v_\varepsilon-\tfrac{\beta}{\lambda}u_\varepsilon \vert u_\varepsilon\vert^2](x,s)ds \Big\|_{L_T^\infty L_x^2} \lesssim  T^{3/4}O(\varepsilon).
\end{equation} 
\end{theorem}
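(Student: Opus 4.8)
My plan is to exploit the commutation $\partial_x S(t-s)=S(t-s)\partial_x$ together with the quadratic structure. Writing $w_\varepsilon:=v_\varepsilon-\tfrac{\beta}{\lambda}\vert u_\varepsilon\vert^2$, the integrand becomes $\partial_x(u_\varepsilon w_\varepsilon)=(\partial_x u_\varepsilon)\,w_\varepsilon+u_\varepsilon\,\partial_x w_\varepsilon$, so the quantity in \eqref{Th2-a} splits as
\[
\partial_x\!\int_0^t S(t-s)[u_\varepsilon w_\varepsilon]\,ds=\int_0^t S(t-s)\big[(\partial_x u_\varepsilon)w_\varepsilon\big]\,ds+\int_0^t S(t-s)\big[u_\varepsilon\,\partial_x w_\varepsilon\big]\,ds=:\mathrm{I}_\varepsilon+\mathrm{II}_\varepsilon .
\]
I would estimate each piece in $L^\infty_T L^2_x$ by the inhomogeneous Strichartz inequality $\big\|\int_0^t S(t-s)F\,ds\big\|_{L^\infty_T L^2_x}\lesssim\|F\|_{L^{8/7}_T L^{4/3}_x}$ attached to the admissible pair $(8,4)$, exactly as in the proof of \eqref{Th1-conclusion-b}. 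For $\mathrm{I}_\varepsilon$ I simply repeat that argument with $\partial_x u_\varepsilon$ in the role of $u_\varepsilon$: H\"older in space gives $\|(\partial_x u_\varepsilon)w_\varepsilon\|_{L^{4/3}_x}\le\|\partial_x u_\varepsilon\|_{L^4_x}\|w_\varepsilon\|_{L^2_x}$, and H\"older in time then yields $\|\mathrm{I}_\varepsilon\|_{L^\infty_T L^2_x}\lesssim T^{3/4}\,\|\partial_x u_\varepsilon\|_{L^8_T L^4_x}\,\|w_\varepsilon\|_{L^\infty_T L^2_x}$. The first factor is $O(1)$ uniformly in $\varepsilon$ by the Strichartz bounds for the $H^1$ solutions of \eqref{S-Benney} furnished by the well-posedness theory, while $\|w_\varepsilon\|_{L^\infty_T L^2_x}=O(\varepsilon)$ is precisely \eqref{Th1-conclusion-a}; hence $\mathrm{I}_\varepsilon=T^{3/4}O(\varepsilon)$.

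The term $\mathrm{II}_\varepsilon$ is the crux, and here the special one-dimensional transport structure enters. Differentiating the transport equation in \eqref{S-Benney} in $x$ and subtracting $\tfrac{\beta}{\lambda}\partial_x\vert u_\varepsilon\vert^2$ gives the pointwise identity $\lambda\,\partial_x w_\varepsilon=-\varepsilon\,\partial_t v_\varepsilon$, which already exhibits the gain of one power of $\varepsilon$. Thus $u_\varepsilon\,\partial_x w_\varepsilon=-\tfrac{\varepsilon}{\lambda}\,u_\varepsilon\,\partial_s v_\varepsilon$, and I would integrate by parts in $s$ inside $\mathrm{II}_\varepsilon$. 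Using $\partial_s S(t-s)=-i\partial_x^2 S(t-s)$ together with $\partial_s u_\varepsilon=i\partial_x^2 u_\varepsilon-i\alpha u_\varepsilon v_\varepsilon$, one finds $\partial_s\big[S(t-s)u_\varepsilon\big]=-i\alpha\,S(t-s)[u_\varepsilon v_\varepsilon]$, so that
\[
\mathrm{II}_\varepsilon=-\tfrac{\varepsilon}{\lambda}\Big[u_\varepsilon(t)v_\varepsilon(t)-S(t)\big(u_{0\varepsilon}v_{0\varepsilon}\big)\Big]-\tfrac{i\alpha\varepsilon}{\lambda}\int_0^t S(t-s)\big[u_\varepsilon v_\varepsilon^2\big]\,ds .
\]
The two boundary terms are $O(\varepsilon)$ by the embedding $H^1\hookrightarrow L^\infty$ and the uniform bounds $\sup_\varepsilon(\|u_\varepsilon\|_{L^\infty_T H^1_x}+\|v_\varepsilon\|_{L^\infty_T L^2_x})<\infty$, since for instance $\|u_\varepsilon(t)v_\varepsilon(t)\|_{L^2_x}\le\|u_\varepsilon(t)\|_{L^\infty_x}\|v_\varepsilon(t)\|_{L^2_x}$.

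The main obstacle is the cubic remainder $\int_0^t S(t-s)[u_\varepsilon v_\varepsilon^2]\,ds$, because $v_\varepsilon$ is controlled only in $L^2$ a priori and $v_\varepsilon^2$ cannot be placed in a Strichartz space at that level. This is exactly where the strengthened hypothesis \eqref{hypotheses} — closeness of the data in $H^1\times H^1$ rather than $H^1\times L^2$ — becomes indispensable: it should propagate to a uniform bound for $v_\varepsilon$ in $L^\infty_T(L^2\cap L^\infty)$, after which the splitting $v_\varepsilon=w_\varepsilon+\tfrac{\beta}{\lambda}\vert u_\varepsilon\vert^2$ reduces $u_\varepsilon v_\varepsilon^2$ to a quintic term in $u_\varepsilon$ (harmless in $H^1$) plus terms carrying at least one factor $w_\varepsilon$, each of which gains an extra $O(\varepsilon)$ from \eqref{Th1-conclusion-a}. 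Running the same Strichartz/H\"older bookkeeping as for $\mathrm{I}_\varepsilon$ then gives $\mathrm{II}_\varepsilon=T^{3/4}O(\varepsilon)$, and adding the two contributions yields \eqref{Th2-a}. I expect the genuinely delicate point to be the uniform-in-$\varepsilon$ control required above — both the Strichartz bound for $\partial_x u_\varepsilon$ and the $L^\infty_T L^\infty_x$ bound for $v_\varepsilon$ — since a naive energy estimate for $\partial_x v_\varepsilon$ loses a derivative on $u_\varepsilon$; one must instead lean on the identity $\lambda\,\partial_x w_\varepsilon=-\varepsilon\,\partial_t v_\varepsilon$ and a bootstrap that keeps every constant independent of $\varepsilon$.
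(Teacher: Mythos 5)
Your opening move --- commuting $\partial_x$ through $S(t-s)$ and splitting $\partial_x(u_\varepsilon w_\varepsilon)=(\partial_x u_\varepsilon)w_\varepsilon+u_\varepsilon\partial_x w_\varepsilon$ with $w_\varepsilon=v_\varepsilon-\tfrac{\beta}{\lambda}|u_\varepsilon|^2$ --- is exactly the paper's first step, and your treatment of $\mathrm{I}_\varepsilon$ is essentially right in spirit, though your choice of exponents is needlessly costly: estimating $\|(\partial_xu_\varepsilon)w_\varepsilon\|_{L^{4/3}_x}\le\|\partial_xu_\varepsilon\|_{L^4_x}\|w_\varepsilon\|_{L^2_x}$ forces you to produce a uniform-in-$\varepsilon$ bound on $\|\partial_xu_\varepsilon\|_{L^8_TL^4_x}$, which does \emph{not} follow from the $L^\infty_TH^1_x$ bound ($H^1(\R)$ does not embed in $\dot W^{1,4}$) and would require a separate Strichartz bootstrap. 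The paper instead uses the dual pair $L^{4/3}_TL^1_x$ and Cauchy--Schwarz in $x$, $\|(\partial_xu_\varepsilon)w_\varepsilon\|_{L^1_x}\le\|\partial_xu_\varepsilon\|_{L^2_x}\|w_\varepsilon\|_{L^2_x}$, so that only the a priori $H^1\times L^2$ bounds and \eqref{Th1-conclusion-a} are needed.

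The genuine gap is in $\mathrm{II}_\varepsilon$. Your identity $\lambda\partial_xw_\varepsilon=-\varepsilon\partial_tv_\varepsilon$ is correct, but the integration by parts that follows is not: $S(t-s)$ is a Fourier multiplier and does not commute with multiplication by $u_\varepsilon(s)$, so the relation $\partial_s[S(t-s)u_\varepsilon]=-i\alpha S(t-s)[u_\varepsilon v_\varepsilon]$ (which concerns $S(t-s)$ applied to $u_\varepsilon$ \emph{alone}) cannot be inserted into $\int_0^tS(t-s)[u_\varepsilon\partial_sv_\varepsilon]\,ds$. Carrying out the integration by parts correctly, via $\partial_s\{S(t-s)F(s)\}=-i\partial_x^2S(t-s)F(s)+S(t-s)F'(s)$ with $F=u_\varepsilon v_\varepsilon$, leaves the additional term
\begin{equation*}
i\int_0^tS(t-s)\big[\partial_x^2(u_\varepsilon v_\varepsilon)-(\partial_x^2u_\varepsilon)v_\varepsilon\big]\,ds
=i\int_0^tS(t-s)\big[2\partial_xu_\varepsilon\,\partial_xv_\varepsilon+u_\varepsilon\,\partial_x^2v_\varepsilon\big]\,ds,
\end{equation*}
which carries two derivatives of $v_\varepsilon$ and is completely out of reach at this regularity; your displayed formula for $\mathrm{II}_\varepsilon$ simply omits it. Moreover, even your cubic remainder $u_\varepsilon v_\varepsilon^2$ rests on an unproved ``should propagate'' claim for an $L^\infty_TL^\infty_x$ bound on $v_\varepsilon$. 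The paper's route is far more elementary and is the point of the strengthened hypothesis \eqref{hypotheses}: differentiate in $x$ the explicit transport representation already derived in the proof of Theorem \ref{Th1},
\begin{equation*}
\partial_xw_\varepsilon(x,t)=\partial_xw_{0\varepsilon}\big(x-\tfrac{\lambda}{\varepsilon}t\big)+\tfrac{\varepsilon\beta}{\lambda^2}\big[1-\cT_\varepsilon(t)\big]\partial_x|u_\varepsilon(x,t)|^2,
\end{equation*}
so that $\|\partial_xw_\varepsilon\|_{L^\infty_TL^2_x}$ is controlled by $\|\partial_xw_{0\varepsilon}\|_{L^2}$ (this is precisely where the $H^1$ closeness of the data enters) plus $O(\varepsilon)\|u_\varepsilon\|_{L^\infty_TH^1_x}^2$; then $\|u_\varepsilon\partial_xw_\varepsilon\|_{L^1_x}\le\|u_\varepsilon\|_{L^2_x}\|\partial_xw_\varepsilon\|_{L^2_x}$ and the same $L^{4/3}_TL^1_x$ Strichartz estimate closes the argument. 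You should replace your time integration by parts with this direct differentiation of the transport formula.
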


\begin{remark}
It is important to note that:
\begin{enumerate}[(a)]
\item The inequalities \eqref{Th1-conclusion-b} and \eqref{Th2-a} means, respectively,  that
\begin{equation}\label{u-varepsilon-NLS-estimate-L2}
\Big\|u_{\varepsilon}-S(t)u_{0\epsilon}+ i\tfrac{\alpha \beta}{\lambda}\int_0^tS(t-s)u_{\varepsilon}|u_{\varepsilon}|^2ds\Big\|_{L^{\infty}_TL^2_x}\lesssim T^{3/4}O(\varepsilon).
\end{equation}
and 
\begin{equation}\label{u-varepsilon-NLS-estimate-H1}
\Big\|\p_x\Big(u_{\varepsilon}-S(t)u_{0\epsilon}+ i\tfrac{\alpha \beta}{\lambda}\int_0^tS(t-s)u_{\varepsilon}|u_{\varepsilon}|^2ds\Big)\Big\|_{L^{\infty}_TL^2_x}\lesssim T^{3/4}O(\varepsilon).
\end{equation}

\medskip 
\item The sign $\frac{\alpha\lambda}{\beta}<0$ is because it enables to obtain bonded solutions respect to $\varepsilon$. 

\medskip 
\item For the 1d-Zakharov system we have removed the condition $n_1\in\dot H^{-1}$ whenever we have  uniformly bounded solutions on the parameter $\varepsilon$. Also we do not impose decay for high frequency on initial data. 
\end{enumerate}
\end{remark}

\section{Preliminary results}

\subsection{On the 1d-Zakharov-Rubenchik}
As we commented in the introduction, the best known result about local well-posedness for the Zakharov-Rubenchik system \eqref{ZR} was established in  \cite{Linares-Matheus}. The flux of this system preserves the following nonlinear quantities: 
\begin{equation}
\mathcal{I}_1(t)=\int_{\mathbb{R}}\vert u(x,t)\vert^2dx,
\end{equation}

\begin{multline}
\mathcal{I}_2(t)=\frac{1}{2}\int_{\mathbb{R}}\vert \partial_xu(x,t)\vert^2dx+\frac{ck}{4}\int_{\mathbb{R}}\vert u(x,t)\vert^4dx+\frac{k}{2}\int_{\mathbb{R}}\Big(\varphi-\frac{a}{2}\rho\Big)(x,t)\vert u(x,t)\vert^2dx\\
+\frac{b}{4}\int_{\mathbb{R}}\vert \rho(x,t)\vert^2dx+\frac{1}{4}\int_{\mathbb{R}}\vert \varphi(x,t)\vert^2dx-\frac{b}{2}\int_{\mathbb{R}}\rho(x,t)\varphi(x,t)dx,
\end{multline}

\begin{equation}
\mathcal{I}_3(t)=\varepsilon\int_{\mathbb{R}}\rho(x,t)\varphi(x,t)dx+\frac{i\varepsilon}{2}\int_{\mathbb{R}}(u(x,t)\partial_x\bar u(x,t)-\bar u(x,t)\partial_x u(x,t))dx
\end{equation}
and consequently
\begin{equation}
\mathcal{I}_4(t)=\mathcal{I}_2(t)+\frac{b}{2\varepsilon}\mathcal{I}_3(t).
\end{equation}

\medskip 
The conservation laws above yield the following global well-posedness:

\begin{theorem}[\cite{Linares-Matheus}]
The Cauchy problem associated to the system \eqref{1dZR} is globally well posed for any initial data $(u_0,\psi_{10},\psi_{20})$ belonging to the spaces:
\begin{enumerate}
	\item[(a)] $H^{s+1/2}\times H^s\times H^s$ with $s\ge 0$, \medskip 
	\item[(b)] $H^1\times L^2\times L^2$ whenever $b>a^2$.
\end{enumerate}
Furthermore, in the energy space the solutions satisfied the uniform control 
\begin{equation}
\Vert (u(t),\psi_1(t),\psi_2(t))\Vert_{H^1\times L^2\times L^2}\lesssim \Vert (u_0,\psi_{10},\psi_{20})\Vert^2_{H^1\times L^2\times L^2}+\Vert u_0\Vert^6_{L^2},
\end{equation}
for all $t\ge 0$. 
\end{theorem}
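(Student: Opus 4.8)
The plan is to prove local well-posedness by a contraction argument, then upgrade to global existence and derive the uniform bound from the conserved quantities $\mathcal{I}_1$--$\mathcal{I}_4$ recorded above; the hypothesis $b>a^2$ is what makes both steps work in the energy space.

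First I would set up the fixed-point problem. Writing the Schr\"odinger equation in Duhamel form,
\[
u(t)=S(t)u_0-i\int_0^tS(t-s)\,\big(c|u|^2-(\sqrt b+\tfrac a2)\psi_2+(\sqrt b-\tfrac a2)\psi_1\big)u(s)\,ds,
\]
and integrating the two transport equations along their characteristics,
\[
\psi_j(x,t)=\psi_{j0}\big(x-\tfrac{c_j}{\varepsilon}t\big)+\frac{g_j}{\varepsilon}\int_0^t\partial_x|u|^2\big(x-\tfrac{c_j}{\varepsilon}(t-s),s\big)\,ds,\qquad j=1,2,
\]
with speeds $c_1=\sqrt b-a$, $c_2=-(\sqrt b+a)$ and the corresponding constants $g_j$, one closes a single integral equation for $u$ after substituting the $\psi_j$. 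The derivative in the forcing $\partial_x|u|^2$ is the source of the regularity mismatch, which is exactly why the Schr\"odinger variable sits half a derivative above the transport variables in part (a). To absorb that lost half derivative I would invoke the sharp smoothing and maximal-function estimates of Kenig--Ponce--Vega for the group $S(t)$ together with the gain coming from integrating the transport equation; at the $H^1\times L^2\times L^2$ level of part (b) these simplify and a Strichartz-based iteration suffices to produce a local solution on a time interval depending only on the size of the data.

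Next I would turn the invariants into an a priori bound, working in the variables $(u,\rho,\varphi)$ in which they are written and using $\mathcal{I}_4=\mathcal{I}_2+\tfrac{b}{2\varepsilon}\mathcal{I}_3$. In this combination the cross term $-\tfrac b2\int\rho\varphi$ of $\mathcal{I}_2$ is cancelled by the $\tfrac b2\int\rho\varphi$ coming from $\mathcal{I}_3$, so the quadratic part of $\mathcal{I}_4$ in $(\rho,\varphi)$ reduces to the positive-definite $\tfrac b4\|\rho\|_{L^2}^2+\tfrac14\|\varphi\|_{L^2}^2$, while $\tfrac12\|\partial_xu\|_{L^2}^2$ supplies the leading $H^1$ contribution. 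The sign-indefinite pieces are the quartic $\tfrac{ck}4\int|u|^4$, the interaction $\tfrac k2\int(\varphi-\tfrac a2\rho)|u|^2$, and the momentum term $\tfrac b4\,\mathrm{i}\int(u\partial_x\bar u-\bar u\partial_xu)$. The key one-dimensional tool is Gagliardo--Nirenberg, $\|u\|_{L^4}^4\lesssim\|u\|_{L^2}^3\|\partial_xu\|_{L^2}$; combined with the conservation $\mathcal{I}_1=\|u\|_{L^2}^2=\|u_0\|_{L^2}^2$ and Young's inequality it yields $\tfrac{ck}4\|u\|_{L^4}^4\le\delta\|\partial_xu\|_{L^2}^2+C_\delta\|u_0\|_{L^2}^6$, which is precisely the source of the $\|u_0\|_{L^2}^6$ in the claimed control. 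The interaction and momentum terms are handled the same way, by Cauchy--Schwarz and Young, splitting them between a small multiple of $\|\partial_xu\|_{L^2}^2$ and small multiples of $\|\rho\|_{L^2}^2$, $\|\varphi\|_{L^2}^2$, $\|u\|_{L^2}^2$. Here $b>a^2$ plays its role: it guarantees that the change of variables $\rho=\psi_1+\psi_2$, $\varphi=\sqrt b(\psi_1-\psi_2)$ is invertible and that both characteristic speeds $\sqrt b-a$ and $-(\sqrt b+a)$ are nonzero and of opposite sign, i.e. the transport subsystem is strictly hyperbolic, which keeps the coercive form of $\mathcal{I}_4$ equivalent to the full $H^1\times L^2\times L^2$ norm after the absorptions.

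The main obstacle, then, is to arrange these absorptions cleanly enough that the indefinite terms are strictly dominated by the positive part of $\mathcal{I}_4$ together with the $L^2$-conserved quantity $\mathcal{I}_1$, producing a closed inequality of the advertised shape
\[
\|(u(t),\psi_1(t),\psi_2(t))\|_{H^1\times L^2\times L^2}\lesssim\|(u_0,\psi_{10},\psi_{20})\|_{H^1\times L^2\times L^2}^2+\|u_0\|_{L^2}^6,
\]
uniformly in $t\ge0$. Finally I would globalise in the standard way: since the local existence time depends only on the size of the data in the working norm and this norm is bounded uniformly in $t$ by the a priori estimate, the local solution extends to all of $[0,\infty)$; reverting from $(u,\rho,\varphi)$ to $(u,\psi_1,\psi_2)$ through the same invertible change of variables transfers the bound and the well-posedness to the stated spaces, with part (a) obtained analogously by combining the low-regularity local theory with the available conserved quantities.
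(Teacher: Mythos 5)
This statement is not proved in the paper: it is imported verbatim from \cite{Linares-Matheus}, and the only justification the text offers is the list of invariants $\mathcal{I}_1,\dots,\mathcal{I}_4$ together with the remark that "the conservation laws above yield the following global well-posedness". So there is no internal proof to compare against, and your proposal has to be measured against the strategy of the cited reference. Your overall plan (local well-posedness by a fixed point, then globalization through the invariants) is that strategy, and the energy half of your argument is essentially correct: the cross terms $\mp\tfrac{b}{2}\int\rho\varphi$ do cancel in $\mathcal{I}_4=\mathcal{I}_2+\tfrac{b}{2\varepsilon}\mathcal{I}_3$, leaving the coercive block $\tfrac12\Vert\partial_xu\Vert_{L^2}^2+\tfrac{b}{4}\Vert\rho\Vert_{L^2}^2+\tfrac14\Vert\varphi\Vert_{L^2}^2$ (which equals $\tfrac12\Vert\partial_xu\Vert_{L^2}^2+\tfrac{b}{2}\Vert\psi_1\Vert_{L^2}^2+\tfrac{b}{2}\Vert\psi_2\Vert_{L^2}^2$ in the diagonal variables), and Gagliardo--Nirenberg combined with $\mathcal{I}_1$ is exactly what produces the $\Vert u_0\Vert_{L^2}^6$ in the stated control.

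The genuine gap is in the local theory, which is the hard part of the cited theorem and which you only gesture at. The obstruction is the derivative loss $\partial_x|u|^2$ on the right of the transport equations when $\psi_j$ lies only in $H^s$ and $u$ only in $H^{s+1/2}$: integrating along characteristics gains nothing in $x$, and when a derivative of the Schr\"odinger nonlinearity falls on the transport component (which sits in $H^{-1}$ at the energy level) neither a bare Strichartz contraction nor the Kenig--Ponce--Vega smoothing/maximal estimates close the iteration at the stated regularities, in particular at the endpoint $s=0$ of part (a). Linares and Matheus close it with bilinear estimates in Bourgain spaces adapted simultaneously to the Schr\"odinger group and to the two transport flows, in the spirit of \cite{G-T-V}; that is also where the hypothesis on the speeds $\sqrt{b}\mp a$, hence on $b$ versus $a^2$, does its work --- your attribution of $b>a^2$ to the coercivity of $\mathcal{I}_4$ cannot be right as stated, since after the cancellation the quadratic form is positive definite for every $b>0$. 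A second, smaller gap: for part (a) with $0\le s<1/2$ the displayed invariants control only $L^2$ of $u$ and the $H^1\times L^2\times L^2$ energy, so "combining the low-regularity local theory with the available conserved quantities" does not by itself globalize below the energy norm; some additional argument (e.g.\ a local existence time depending only on conserved norms, or persistence of regularity from the energy space) is needed and is not supplied.
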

 
For more details on this matter we recommend \cite{Linares-Matheus,Oliveira1}. Also notice that if the initial data in the Zakahrov-Rubenchik system (depending on $\varepsilon$) satisfy the compatibility conditions \eqref{Cconditions} then the solutions are bounded uniformly.     
 
\subsection{On the 1d-Zakharov system} 
Concerning the existence of local  solutions for the Cauchy problem associated to \eqref{Z}, a wide class of regularity was obtained in \cite{G-T-V}
for data 
$$(u_0, n_0, n_1)\in H^s\times H^{\kappa}\times H^{\kappa-1},\quad \text{with} \quad n_1=n_t(0),$$
in the case $\varepsilon =1$. We  notice that the  same theory holds for all 
$\varepsilon >0$ because of the re-scaling:
$$u(x,t)=U(\varepsilon^{-1} x,\varepsilon^{-2}t)\quad \text{and}\quad n(x,t)=N(\varepsilon^{-1} x,\varepsilon^{-2}t),$$ 
which transform the system \eqref{Z} to 
\begin{equation}\label{Z-GTV}
	\begin{cases}
		i\partial_{r}U+\partial_{z}^2u=\varepsilon^2 N U,\medskip \\
		\partial_{r}^2N -\partial_{z}^2N=\partial_x^2\vert U\vert^2,
	\end{cases}
\end{equation} 
with $r= \varepsilon^{-2}t$ and $z=\varepsilon^{-1}x$. Indeed, system \eqref{Z-GTV} is included in the local theory developed in \cite{G-T-V}, since bi-linear estimates using for the authors do not depend on the coefficients of nonlinear terms. More precisely, we have the following result:

\medskip 
\begin{theorem}[\cite{G-T-V}]\label{lwp-Z-1d}
	For any $(u_0, n_0, n_1) \in H^s(\R)\times H^{\kappa}(\R)\times H^{\kappa -1}$, with $s$ and $\kappa$ verifying the conditions:
	\begin{equation}\label{wp-condition-Z-1d}
	-\frac{1}{2}< s-\kappa \le 1\quad \text{and}\quad -\frac{1}{2}\le \kappa \le 2s-\frac{1}{2},
	\end{equation}
	there exists a positive time $T=T(\|u_0\|_{H^s}, \|n_0\|_{H^{\kappa}}, \|n_1\|_{H^{\kappa-1}})$ and a unique solution $(u(t,\cdot),v(t, \cdot))$ of the initial value problem
	\eqref{Z} in the time interval $[0,T]$, satisfying
	$$(u, n, n_t)\in \cC\left([0,T]; H^s(\R) \times H^{\kappa}(\R) \times H^{\kappa -1}(\R)\right).$$
	Moreover, the map $(u_0,n_0, n_1) \longmapsto (u(\cdot, t),n(\cdot, t), n_t(\cdot, t))$ is locally Lipschitz.
\end{theorem}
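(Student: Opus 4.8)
The plan is to prove this by the Fourier restriction norm (Bourgain) method, in the form carried out by Ginibre, Tsutsumi and Velo. After the rescaling described above I may set $\varepsilon=1$. First I would lower the order of the wave equation in \eqref{Z} by introducing $n_\pm=n\pm i\omega^{-1}\p_t n$, with $\omega=(-\p_x^2)^{1/2}$; then $n=\tfrac12(n_++n_-)$ and \eqref{Z} becomes a Schr\"odinger equation coupled to two half-wave equations,
\[
(i\p_t+\p_x^2)u=\tfrac12(n_++n_-)\,u,\qquad (i\p_t\mp\omega)n_\pm=\pm\,\omega\vert u\vert^2 .
\]
Writing this in Duhamel form, with free evolutions $e^{it\p_x^2}u_0$ and $e^{\mp it\omega}n_{\pm,0}$, reduces the theorem to finding a fixed point of the resulting integral operator on a suitable function space over $[0,T]$.

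Next I would set up the Bourgain spaces adapted to the two symbols: $X^{s,b}_S$ with weight $\lan\xi\ran^{s}\lan\tau+\xi^2\ran^{b}$ for the Schr\"odinger variable, and $X^{\kappa,b}_\pm$ with weight $\lan\xi\ran^{\kappa}\lan\tau\pm\vert\xi\vert\ran^{b}$ for the half-wave variables, together with their restrictions to the slab $\R\times[0,T]$. Taking $b>\tfrac12$ gives the embedding of these spaces into $\cC([0,T];H^s)$ (resp.\ $\cC([0,T];H^\kappa)$), the usual linear estimates for the free groups and for the Duhamel operator, and---after inserting a smooth time cutoff---a factor $T^{\theta}$ with some $\theta>0$ that will make the nonlinear map a contraction for $T$ small.

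The hard part, and the real content of the theorem, will be the two bilinear estimates encoding the coupling,
\[
\big\|n_\pm u\big\|_{X^{s,b-1}_S}\lesssim \|n_\pm\|_{X^{\kappa,b}_\pm}\,\|u\|_{X^{s,b}_S},
\qquad
\big\|\omega(u\bar u)\big\|_{X^{\kappa,b-1}_\pm}\lesssim \|u\|_{X^{s,b}_S}^{2}.
\]
I would prove these by duality, reducing each to an $L^2$ bound on a weighted convolution in $(\xi,\tau)$, and then dyadically decomposing into the high-high, high-low and low-high frequency interactions. The decisive quantity is the size of the resonance function obtained by comparing $\tau+\xi^2$ with $\tau'\pm\vert\xi'\vert$ and the output modulation; in one space dimension this function is non-degenerate enough that a case analysis (together with the sharp $L^4_{t,x}$ Strichartz estimate for the Schr\"odinger group entering the wave-producing term) closes the estimates. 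Matching the gains and losses of derivatives across all the interaction regimes is exactly what forces the constraints $-\tfrac12<s-\kappa\le1$ and $-\tfrac12\le\kappa\le 2s-\tfrac12$ in \eqref{wp-condition-Z-1d}; the endpoints, where one is driven to take $b$ just above $\tfrac12$ and a logarithmic divergence threatens, are the most delicate and would be handled by a Besov-type refinement or by exploiting the extra room in the Strichartz input.

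Finally, with the bilinear estimates in hand, a standard contraction-mapping argument on a ball of $X^{s,b}_S\times X^{\kappa,b}_+\times X^{\kappa,b}_-$ over $[0,T]$ produces a unique fixed point, hence a local solution, with existence time $T$ depending only on $\|u_0\|_{H^s}$, $\|n_0\|_{H^\kappa}$ and $\|n_1\|_{H^{\kappa-1}}$ through the factor $T^\theta$; applying the same bilinear bounds to the difference of two solutions gives uniqueness in the iteration space and the local Lipschitz continuity of the data-to-solution map. Undoing the substitution via $n=\tfrac12(n_++n_-)$ and $\p_t n=\tfrac{\omega}{2i}(n_+-n_-)$ then recovers the solution $(u,n,\p_t n)\in\cC([0,T];H^s\times H^\kappa\times H^{\kappa-1})$ asserted in the statement.
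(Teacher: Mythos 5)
The paper does not prove this statement at all --- it is quoted from Ginibre--Tsutsumi--Velo \cite{G-T-V} --- and your outline is a faithful reconstruction of exactly the argument that reference uses: the order reduction $n_\pm=n\pm i\omega^{-1}\partial_t n$ (which the introduction of this paper explicitly attributes to \cite{G-T-V}), Bourgain spaces adapted to the Schr\"odinger and half-wave symbols, the two bilinear estimates whose case analysis generates the region \eqref{wp-condition-Z-1d}, and a contraction argument. The only caveats are that your sketch defers the entire substance to the unproved bilinear estimates and glosses over the low-frequency care needed to make sense of $\omega^{-1}n_1$ for $n_1\in H^{\kappa-1}(\R)$, but as a blind reconstruction of the cited proof it is on target.
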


The Figure  \ref{WP-Region-GTV} shows the region $\mathcal{W}$ of the Sobolev indexes defined by conditions \eqref{wp-condition-Z-1d}.

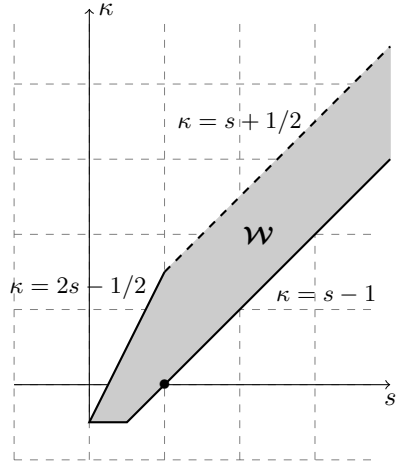
\begin{figure}[htp]
	\centering
	\begin{tikzpicture}
	\draw[very thin,color=gray, dashed] (-1,-1) grid (3.8,4.8);
	\draw[->] (-1,0)--(4,0) node[below] {$s$};
	\draw[->] (0,-0.5)--(0,5) node[right] {$\kappa$};
	\filldraw[color=gray!40](0,-0.5)--(0.5,-0.5)--(4,3)--(4,4.5)--(1,1.5)--(0,-0.5);
	\draw[thick](1,1.5)--(0,-0.5)--(0.5,-0.5)--(4,3);
	\draw[thick, dashed](1,1.5)--(4,4.5);
	\node at (2.25,2){\small{$\boldsymbol{\mathcal{W}}$}};
	\node at (-0.15,1.3){\small{$\kappa=2s-1/2$}};
	\node at (2,3.5){\small{$\kappa=s+1/2$}};
	\node at (3.15,1.2){\small{$\kappa=s-1$}};
	\node at (1,0){\small{$\bullet$}};
	\end{tikzpicture}
	\caption{Local well-posedness  regularity for \eqref{Z} established in \cite{G-T-V}.}
	\label{WP-Region-GTV}
\end{figure}

As we see, energy regularity $H^1\times L^2\times H^{-1}$ is covered in Theorem \ref{lwp-Z-1d}, so global well-posedness in this space is automatically provided by
conservation laws:

\begin{equation}
\mathcal{J}_1(t)=\int_{\R} |u(x,t)|^2dx
\end{equation}
and
\begin{equation}
\mathcal{J}_2(t)=\int_{-\infty}^{+\infty}\Big( \vert\partial_xu(x,t)\vert^2+n(x,t)\vert u(x,t)\vert^2+\frac{1}{2}n^2(x,t)+\frac{\varepsilon^2}{2}\vert v(x,t)\vert^2 \Big)dx,
\end{equation}
due to the Hamiltonian version of the Zakharov system \eqref{Z}:
\begin{equation}\label{HamZ}
\begin{cases}
i\partial_tu+\partial_{xx}u=nu,\\
\partial_{t}n+\partial_x v=0,\\
\varepsilon^2\partial_t v+\partial_x n=-\partial_x\vert u \vert^2,
\end{cases}
\end{equation} 
where $v(x,t)=\partial_x w(x,t)$ and $\partial_tn(x,0)=n_1(x)=-\partial_{xx} w_0$ for a certain function $w$.

\medskip
We can notice that
\begin{equation}
v=\partial^{-1}_x\partial_tn=\partial^{-1}_xn_1\quad\text{in $t=0$},
\end{equation}
so $\mathcal{J}_2$ can be written only in terms of $u$ and $n$.

\medskip
Finally, as solutions of \eqref{Zred} are solutions of \eqref{Z}, where  
\begin{equation}
\partial_xn_{\mp}=\varepsilon\partial_tn\pm\partial_xn,
\end{equation}
then
\begin{equation}
\partial_xn_{\mp}=-\varepsilon\partial_xv\pm\partial_xn=\partial_x(-\varepsilon v\pm n);
\end{equation}
so
\begin{equation}
n_{\mp}=-\varepsilon v\pm n
\end{equation}
and
\begin{equation}
\Vert n_{\mp}\Vert_{L^2}\leq\varepsilon \Vert v\Vert_{L^2}+\Vert n\Vert_{L^2}.
\end{equation}

\medskip
This indicates a control of the components $n_{\mp}$ in function of the original variables $v$ and $n$, whence the solutions $(u_\varepsilon, n_{\varepsilon_{\pm}})$ are uniformly bounded in $H^1\times L^2$. However, as we will see later, the system \eqref{S-GBenney} has an intrinsically very good structure with three conserved quantities, so we do not need to assume $n_1\in \dot H^{-1}$ as in \eqref{HamZ} to have a control of the solutions on $\varepsilon$.
  
\subsection{On the Benney system}\label{Benney-section}
We will denote by $\cT_{\varepsilon}(t)$ the family of translator operators associated to the free wave equation
\begin{equation}\label{wave-linear}
\begin{cases}
v_t + \frac{\lambda}{\varepsilon}v_x =0,\\
v(x,0)=v_0,
\end{cases}
\end{equation}
that is, 
\begin{equation}\label{Translator-Operator}
\cT_{\varepsilon}(t)v_0 = v_0\big(x-\tfrac{\lambda}{\varepsilon}t\big).
\end{equation}

\medskip 
The most general theory concerning local well-posedness, known so far, in Sobolev spaces for the Cauchy problem associated to the Benney system \eqref{S-Benney} also was derived in \cite{G-T-V}, where the authors established local well-podness for initial data $(u_0, v_0)\in H^s(\R)\times H^{\kappa}(\R)$ in the same region of regularity  showed in Figure \ref{WP-Region-GTV},
with time of the existence $T$, depending on the norms $\|u_0\|_{H^s},$ and  $\|v_0\|_{H^{\kappa}}$. Indeed, the results were obtained as a corollary of the proof of Theorem \ref{lwp-Z-1d}. 

\medskip 
The solution for the system \eqref{S-Benney} with initial data $({u_0}_{\varepsilon}, {v_0}_{\varepsilon})$ satisfy the following integral equations:
\begin{equation}\label{DuhamelEq}
\begin{cases}
\displaystyle u_{\varepsilon}(x,t)=S(t)u_{0\varepsilon}-i\alpha\int_0^{t}S(t-s)u_{\varepsilon}(x,s)v_{\varepsilon}(x,s)ds,\medskip\\
\displaystyle v_{\varepsilon}(x,t)=\cT_{\varepsilon}(t)v_{0\varepsilon}+\tfrac{\beta}{\varepsilon}\int_0^{t}\cT_{\varepsilon}(t-s)\partial_x\vert u_{\varepsilon}(x,s)\vert^2ds.
\end{cases}
\end{equation}

\medskip 
The flow of the system \eqref{S-Benney} preserves the following nonlinear functional:

\begin{equation}\label{Mass}
\cM_{\varepsilon}(t):=\int_{-\infty}^{+\infty}|u_{\varepsilon}(x,t)|^{2}dx=\cM_{\varepsilon}(0)\quad \big(\text{mass}\big),
\end{equation}
 
\begin{equation}\label{Momment}
\cK_{\varepsilon}(t):=\int_{-\infty}^{+\infty}\Big(|v_{\varepsilon}(x,t)|^2 +\tfrac{2\beta}{\alpha \varepsilon} \text{Im}\, u_{\varepsilon}(x,t)\p_x\bar{u}_{\varepsilon}(x,t)\Big)dx=\cK_{\varepsilon}(0)
\quad \big(\text{moment}\big)
\end{equation}
and
\begin{equation}\label{Energy}
\cE_{\varepsilon}(t):=\int_{-\infty}^{+\infty}\Big(|\p_xu_{\varepsilon}(x,t)|^2 +\alpha v_{\varepsilon}(x,t)|u_{\varepsilon}(x,t)|^2 
-\tfrac{\alpha \lambda}{2\beta}v^2_{\varepsilon}(x,t)\Big)dx=\cE_{\varepsilon}(0),
\quad \big(\text{energy}\big),
\end{equation}
for all $0\le t< T^*_{\varepsilon}$, where $T^*_{\varepsilon}$ is the maximal time of existence for the respective solution. 

\medskip 
Our  main goal is to study the asymptotic behavior for  solutions of the Benney system  in the topology of $\cC\big([0,T];\,H^1(\mathbb{R})\big)$ for the component  $u_{\varepsilon}$ and with an appropriated topology for the corresponding transport solution  $v_{\varepsilon}$. However, if we expect a strong convergence result in $\cC([0,T];L^2(\mathbb{R}))$ for solutions  $v_{\varepsilon}$ it is natural to impose a compatibility condition on the initial data, like

\begin{equation}
v_0=\frac{\beta}{\lambda}\vert u_0 \vert^2
\end{equation}
or
\begin{equation}
\lim\limits_{\varepsilon \to 0}\big\|\lambda {v_0}_{\varepsilon}-\beta\vert {u_0}_{\varepsilon } \vert^2\big\|_{L^2}=0 
\end{equation}
if the data vary with $\varepsilon$. For instance, if  $u_{0\varepsilon}\equiv0$ for all $\varepsilon$ then 
\begin{equation}
u_{\varepsilon}(x, t)\equiv0\quad\text{and}\quad v_{\varepsilon}(x,t)=v_{0\varepsilon}\big(x-\tfrac{\lambda}{\varepsilon}t\big),
\end{equation}
so  
\begin{equation}
\Vert v_{\varepsilon}(x,t) \Vert_{L_t^{\infty}L^2_x}=\Vert v_{0\varepsilon}\Vert_{L_x^2}.
\end{equation}
Hence,  we do not have much chance of show convergence in the space $L_T^{\infty}H^1_x\times L_T^{\infty}L^2_x$ without assu\-ming that
$\Vert v_{0\varepsilon}\Vert_{L^2_x} \to 0$ as $\varepsilon\to0$. For non compatible initial data an initial layer phenomenon should appear.

\subsection{Strichartz estimates}
Finally, we recall  some smoothing effects for the one-dimensional free Schr\"odinger  group $\displaystyle S(t)$.

\begin{lemma}[\textbf{Strichartz estimates} \cite{Cazenave-Book}] Let $(p_1,q_1)$ and $(p_2,q_2)$ be two pairs of admissible exponents for $S(t)$ in $\R$; that is, both satisfying
	the condition
	\begin{equation}\label{Strichart-Admissible}
	\frac{2}{p_i}=\frac12 -\frac 1{q_i}\quad \text{and} \quad 2\le q_i\le \infty\quad (i=1,2).
	\end{equation}
	Then, for any $0<T\leq \infty$, we have
	\begin{equation}\label{Strichart-homogeneous}
	\|S(t)f\|_{L^{p_1}_TL^{q_1}_x}\le c\|f\|_{L^2(\R)},
	\end{equation}
	as well as the non-homogeneous version
	\begin{equation}\label{Strichart-non-homogeneous}
	\left\|\int_0^tS(t-s)g(\cdot, s)ds\right\|_{L^{p_1}_TL_x^{q_1}}\le c \|g\|_{L^{p'_2}_TL_x^{q'_2}},
	\end{equation}
	where $1/p_2+1/p_2'=1$, $1/q_2 + 1/q'_2=1$. The constants in both inequalities are independent of $T$. 
\end{lemma}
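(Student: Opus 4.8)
The plan is the classical $TT^*$ argument resting on one-dimensional dispersion. First I would use the explicit convolution kernel of $S(t)=e^{it\partial_x^2}$ to record the dispersive bound $\|S(t)f\|_{L^\infty_x}\le C|t|^{-1/2}\|f\|_{L^1_x}$, and pair it with the $L^2$-unitarity $\|S(t)f\|_{L^2_x}=\|f\|_{L^2_x}$. Riesz--Thorin interpolation between these two endpoints then yields, for all $2\le q\le\infty$,
\begin{equation*}
\|S(t)f\|_{L^q_x}\le C\,|t|^{-(\frac12-\frac1q)}\|f\|_{L^{q'}_x},
\end{equation*}
where the admissibility relation $\frac2p=\frac12-\frac1q$ identifies the decay rate as $|t|^{-2/p}$.

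To prove the homogeneous estimate \eqref{Strichart-homogeneous} for a fixed admissible pair $(p,q)$, I would use the $TT^*$ principle: writing $Tf=S(\cdot)f$, the bound $\|T\|_{L^2\to L^p_TL^q_x}\le c$ is equivalent to $\|TT^*\|\le c^2$, where $TT^*g=\int_{\R}S(t-s)g(s)\,ds$. I would estimate $TT^*$ by pushing the spatial norm inside the integral with Minkowski's inequality, applying the interpolated dispersive estimate (here the input space $L^{q'}_x$ is exactly the dual of the output $L^q_x$, so it applies directly), and thereby dominating the result by the time-convolution of $\|g(\cdot,s)\|_{L^{q'}_x}$ against $|t|^{-2/p}$. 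Hardy--Littlewood--Sobolev closes this because the exponent $\gamma=2/p=\frac12-\frac1q$ lies in $[0,\frac12)\subset(0,1)$ in dimension one, so no endpoint obstruction appears; the degenerate case $q=2$ ($p=\infty$) is just unitarity.

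For the inhomogeneous estimate \eqref{Strichart-non-homogeneous} with two possibly different pairs, I would first treat the non-retarded integral by factoring through $L^2$: since $S(t-s)=S(t)S(-s)$, one has $\int_{\R}S(t-s)g(s)\,ds=S(t)\,h$ with $h=\int_{\R}S(-s)g(s)\,ds$, and then the homogeneous estimate for $(p_1,q_1)$ together with its dual for $(p_2,q_2)$ gives
\begin{equation*}
\Big\|\int_{\R}S(t-s)g(s)\,ds\Big\|_{L^{p_1}_TL^{q_1}_x}\le c\,\|h\|_{L^2}\le c^2\,\|g\|_{L^{p_2'}_TL^{q_2'}_x}.
\end{equation*}
To replace $\int_{\R}$ by the retarded integral $\int_0^t$ I would invoke the Christ--Kiselev lemma, which performs this truncation whenever the output time exponent strictly exceeds the dual input one, i.e. $p_1>p_2'$. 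In one dimension every admissible pair has $\frac1{p_i}\in[0,\frac14]$, so $\frac1{p_1}+\frac1{p_2}\le\frac12<1$, forcing $p_1>p_2'$ automatically; hence Christ--Kiselev applies for all admissible pairs with no exceptional configuration. All constants come from fixed interpolation and Hardy--Littlewood--Sobolev inequalities, so they do not depend on $T$, giving the claim for every $0<T\le\infty$.

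The only genuinely non-soft ingredient is the Christ--Kiselev lemma used for the retarded truncation; the rest is dispersion, interpolation, and Hardy--Littlewood--Sobolev. I therefore expect the inhomogeneous step to be the main point to verify, and its success hinges on the one-dimensional feature that $p=2$ is never admissible (it would demand $\frac1q<0$): this keeps $\gamma=2/p$ strictly below $1$, so Hardy--Littlewood--Sobolev applies, and simultaneously keeps the time exponents off the diagonal, so Christ--Kiselev applies, removing every endpoint difficulty at once.
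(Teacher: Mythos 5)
The paper offers no proof of this lemma at all; it is quoted from the cited reference (Cazenave's book), so the right comparison is with the classical proof given there. Your argument is correct, and its first half coincides with the classical one: the kernel bound $\|S(t)f\|_{L^\infty_x}\le C|t|^{-1/2}\|f\|_{L^1_x}$, Riesz--Thorin interpolation against $L^2$-unitarity, and the $TT^*$ reduction closed by Hardy--Littlewood--Sobolev in time (one slip: the range of $\gamma=\tfrac12-\tfrac1q$ is $[0,\tfrac12]$, not $[0,\tfrac12)$, since $(p,q)=(4,\infty)$ is admissible in one dimension; this is harmless because HLS only needs $\gamma<1$). The genuine divergence is in the inhomogeneous estimate with two different admissible pairs: you prove the untruncated estimate by factoring through $L^2$ and then invoke the Christ--Kiselev lemma, verifying correctly that one-dimensional admissibility forces $p_i\ge 4$, hence $p_1>p_2'$, so the truncation lemma always applies. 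The classical proof in the cited reference never uses Christ--Kiselev: it handles the retarded integral directly, observing that in the diagonal case $(p_1,q_1)=(p_2,q_2)$ the truncated kernel $\chi_{\{s<t\}}|t-s|^{-2/p}$ is pointwise dominated by the untruncated one so HLS applies verbatim, obtaining the cases where one of the pairs is $(\infty,2)$ by duality with the homogeneous estimate, and reaching general mixed pairs by interpolating between these configurations. Your route buys uniformity and robustness -- a single black-box lemma disposes of all pairs at once, and the scheme survives in settings where duality bookkeeping is awkward -- at the price of importing a nontrivial external result; the classical route is self-contained (only interpolation, duality and HLS), which is presumably why it is the one in the reference the paper cites. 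One small caution on your version: if you apply Christ--Kiselev with output exponent $p_1=\infty$ (the pair $(\infty,2)$), check that the formulation you invoke allows the upper exponent to be infinite, or else dispose of that case directly by the elementary duality argument.
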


\section*{\textbf{Acknowledgments}}
A. J. Corcho  would like to thank the support given by the Graduate Program in Mathematics of the Universidade Federal do Rio de Janeiro - UFRJ. J. C. Cordero would like to thank to the Instituto de Matem\'atica at UFRJ and to the Instituto de Matem\'atica Pura e Aplicada - IMPA for the support during the Postdoctoral Summer Program 2020, where part of this work was done. We would like to thank Hermano Frid for suggesting us to study the dynamics of the Benney system.  We are also grateful to Felipe Linares for some useful comments on a previous version.    

\section{Energy estimates and weak convergence}

In this section we present some estimates that will be useful to prove the statements of the main results. We describe the dynamic of the global solutions of \eqref{S-Benney} in the space $H^1\times L^2$ with respect to the parameter $\varepsilon$, when extra hypotheses are put on the initial data. 

\subsection{A priori estimates for the Benney system}
\begin{lemma}\label{apriori-estimate-H1L2}
If $\big\{({u_0}_{\varepsilon}, {v_0}_{\varepsilon})\big\}_{0< \varepsilon <1}$ is a family of data in the space $H^1\times L^2$  such that
\begin{equation}\label{apriori-estimate-H1L-hypotheses}
\sup_{0< \varepsilon < 1}(\|{u_0}_{\varepsilon}\|_{H^1}+\|{v_0}_{\varepsilon}\|_{L^2})<\infty,
\end{equation}
then the corresponding solutions $(u_{\varepsilon}, v_{\varepsilon})$ of the IVP \eqref{S-Benney} in $H^1\times L^2$ provided by Theorem A  satisfies
\begin{align}
&\sup_{0< \varepsilon < 1}(\|{u_{\varepsilon}}\|_{L^{\infty}_tH^1_x}+\|v_{\varepsilon}\|_{L^{\infty}_tL^2_x})<\infty \quad \text{if}\quad \tfrac{\alpha \lambda }{\beta}<0,\label{grow-norm-A}\\
&\|{u_{\varepsilon}}\|_{L^{\infty}_tH^1_x}+\|v_{\varepsilon}\|_{L^{\infty}_tL^2_x}=O(1/\varepsilon)\quad \text{if}\quad \tfrac{\alpha \lambda }{\beta}>0.\label{grow-norm-B}
\end{align}
\end{lemma}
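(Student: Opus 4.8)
The plan is to derive the bounds directly from the three conservation laws \eqref{Mass}, \eqref{Momment} and \eqref{Energy}, handling the two sign regimes separately. First, mass conservation \eqref{Mass} gives $\|u_\varepsilon(t)\|_{L^2}=\|u_{0\varepsilon}\|_{L^2}$, which is uniformly bounded by \eqref{apriori-estimate-H1L-hypotheses}; set $M_0:=\sup_\varepsilon\|u_{0\varepsilon}\|_{L^2}$. It then remains to control $A(t):=\|\partial_x u_\varepsilon(t)\|_{L^2}$ and $B(t):=\|v_\varepsilon(t)\|_{L^2}$. The only nonlinear term entering the energy is $\alpha\int v_\varepsilon|u_\varepsilon|^2\,dx$, which I would estimate by H\"older together with the one-dimensional Gagliardo--Nirenberg inequality $\|f\|_{L^4}^2\lesssim\|f\|_{L^2}^{3/2}\|\partial_x f\|_{L^2}^{1/2}$, yielding $\big|\int v_\varepsilon|u_\varepsilon|^2\big|\lesssim B\,M_0^{3/2}A^{1/2}$. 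Since the data are uniformly bounded in $H^1\times L^2$, one also checks at once that $\mathcal{E}_\varepsilon(0)=O(1)$ uniformly in $\varepsilon$ (the energy \eqref{Energy} carries no factor of $\varepsilon$).

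In the case $\tfrac{\alpha\lambda}{\beta}<0$ the coefficient $-\tfrac{\alpha\lambda}{2\beta}=:c_1$ is positive, so energy conservation rearranges to $A^2+c_1B^2=\mathcal{E}_\varepsilon(0)-\alpha\int v_\varepsilon|u_\varepsilon|^2\,dx$, a coercive quadratic form in $(A,B)$ on the left. Applying the Gagliardo--Nirenberg bound and a weighted Young inequality $BA^{1/2}\le\tfrac12\eta^2 B^2+\tfrac{1}{2\eta^2}A$, with $\eta$ chosen small enough that the resulting $B^2$ contribution absorbs into $c_1B^2$, I obtain an inequality of the form $A^2+\tfrac{c_1}{2}B^2\le\mathcal{E}_\varepsilon(0)+C'A$, where $C'$ depends only on $M_0$ and the parameters. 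The quadratic inequality $A^2-C'A\le\mathcal{E}_\varepsilon(0)$ then bounds $A(t)$, and hence $B(t)$, uniformly in $t$ and $\varepsilon$, which is exactly \eqref{grow-norm-A}.

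In the case $\tfrac{\alpha\lambda}{\beta}>0$ the quadratic form in \eqref{Energy} is indefinite and coercivity is lost, so energy conservation alone cannot close the estimate; here the moment \eqref{Momment} is essential. From $\mathcal{K}_\varepsilon(t)=\mathcal{K}_\varepsilon(0)$ and $\big|\int\mathrm{Im}\,(u_\varepsilon\partial_x\bar u_\varepsilon)\big|\le M_0A$ I get $B^2\le|\mathcal{K}_\varepsilon(0)|+\tfrac{2|\beta|}{|\alpha|\varepsilon}M_0A$, and the same inequality at $t=0$ shows $|\mathcal{K}_\varepsilon(0)|=O(1/\varepsilon)$. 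Substituting this into the rearranged energy identity $A^2=\mathcal{E}_\varepsilon(0)-\alpha\int v_\varepsilon|u_\varepsilon|^2\,dx+\tfrac{\alpha\lambda}{2\beta}B^2$ and using $\mathcal{E}_\varepsilon(0)=O(1)$ together with the Gagliardo--Nirenberg bound, every term on the right is at most $O(1/\varepsilon)\cdot A$ plus strictly lower-order contributions; dividing by $A$ (for $A\ge1$) gives $A=O(1/\varepsilon)$, and then the moment bound returns $B=O(1/\varepsilon)$, establishing \eqref{grow-norm-B}.

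I expect the main obstacle to be the indefinite regime $\tfrac{\alpha\lambda}{\beta}>0$: because the energy is no longer coercive, the only control on $B=\|v_\varepsilon\|_{L^2}$ comes through the moment functional, whose cross term carries the explicit factor $1/\varepsilon$. This factor is unavoidable and is precisely what produces the $O(1/\varepsilon)$ growth, so the delicate point is the careful bookkeeping of the powers of $\varepsilon$ through the coupled Young and Gagliardo--Nirenberg estimates, confirming that the leading balance is $A^2\sim\varepsilon^{-1}A$ and not worse.
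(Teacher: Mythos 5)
Your proposal is correct and follows essentially the same route as the paper: mass plus energy conservation combined with Gagliardo--Nirenberg and a weighted Young inequality in the coercive case $\tfrac{\alpha\lambda}{\beta}<0$, and the moment functional $\mathcal{K}_\varepsilon$ (whose cross term carries the explicit $1/\varepsilon$) to recover control of $\|v_\varepsilon\|_{L^2}$ in the indefinite case, yielding the $O(1/\varepsilon)$ rate. The only differences from the paper's proof are cosmetic (the order in which Young and Gagliardo--Nirenberg are applied, and dividing by $A$ versus absorbing with a small parameter $\delta$).
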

\begin{proof}
We begin with the proof of \eqref{grow-norm-A}. From \eqref{Energy} we have 
\begin{equation*}\label{proof-grow-1a}
\begin{split}
\|\p_xu_{\varepsilon}\|_{L^2}^2 + |\tfrac{\alpha}{2\beta}|\,\|v_{\varepsilon}\|_{L^2}^2&=\cE_{\varepsilon}(0) - \alpha \int_{-\infty}^{+\infty} v_{\varepsilon}|u_{\varepsilon}|^2dx\\
&\le \cE_{\varepsilon}(0) + |\alpha|\,\|v_{\varepsilon}\|_{L^2}\|u_{\varepsilon}\|_{L^4}^2\\
&\le \cE_{\varepsilon}(0) + |\alpha| \big( \tfrac{1}{4|\beta|} \|v_{\varepsilon}\|_{L^2}^2 + |\beta|\, \|u_{\varepsilon}\|^4_{L^4}\big).
\end{split}
\end{equation*}
So, by using Gagliardo-Nirenberg inequality and \eqref{Mass} we have
\begin{equation*}\label{proof-grow-1b}
\|\p_xu_{\varepsilon}\|_{L^2}^2 + |\tfrac{\alpha}{4\beta}| \|v_{\varepsilon}\|_{L^2}^2
\le \cE_{\varepsilon}(0)+  c\cM_{\varepsilon}^{3/2}(0)\,\|\p_xu_{\varepsilon}\|_{L^2},
\end{equation*}
that allows us to conclude 
\begin{equation*}\label{proof-grow-1c}
\|\p_xu_{\varepsilon}(\cdot, t)\|_{L^2}^2 + \|v_{\varepsilon}(\cdot, t)\|_{L^2}^2 \lesssim_{\alpha, \beta} \; \cE_{\varepsilon}(0) + \cM_{\varepsilon}^3(0), \quad \text{for all}\;  t\ge 0.
\end{equation*}
Then, from \eqref{apriori-estimate-H1L-hypotheses} we deduce immediately  \eqref{grow-norm-A}. 

\medskip 
Now we proceeds with the proof of  \eqref{grow-norm-B}. Again, using \eqref{Energy} and Gagliardo-Nirenberg inequality we obtain
\begin{equation}\label{proof-grow-2a}
\begin{split}
\|\p_xu_{\varepsilon}\|_{L^2}^2 &=\cE_{\varepsilon}(0) + |\tfrac{\alpha}{2\beta}|\,\|v_{\varepsilon}\|_{L^2}^2 - \alpha \int_{-\infty}^{+\infty} v_{\varepsilon}|u_{\varepsilon}|^2dx\\
& \le \cE_{\varepsilon}(0) + \big(|\tfrac{\alpha}{2\beta}| + |\tfrac{\alpha}{2}| \big)\,\|v_{\varepsilon}\|_{L^2}^2 +  |\tfrac{\alpha}{2}|\|u_{\varepsilon}\|^4_{L^4}\\
&\le \cE_{\varepsilon}(0) + \big(|\tfrac{\alpha}{2\beta}| + |\tfrac{\alpha}{2}| \big)\,\|v_{\varepsilon}\|_{L^2}^2 +  
c|\tfrac{\alpha}{2}|\cM_{\varepsilon}^{3/2}(0)\|\partial_xu_{\varepsilon}\|_{L^2}\\
&\le \cE_{\varepsilon}(0) + \big(|\tfrac{\alpha}{2\beta}| + |\tfrac{\alpha}{2}| \big)\,\|v_{\varepsilon}\|_{L^2}^2 +  
c_{\alpha, \beta}\cM_{\varepsilon}^{3}+\tfrac{1}{4}\|\partial_xu_{\varepsilon}\|^2_{L^2}.
\end{split}
\end{equation}
On the other hand, from \eqref{Momment} we get 
\begin{equation}\label{proof-grow-2b}
\begin{split}
\|v_{\varepsilon}(\cdot, t)\|_{L^2}^2 &\le \cK_{\varepsilon}(0) -\tfrac{2\beta}{\alpha \varepsilon}\text{Im}\int_{-\infty}^{+\infty} u_{\varepsilon} \partial_x\bar{u}_{\varepsilon}dx\\
&\le \cK_{\varepsilon}(0) + \big|\tfrac{2\beta}{\alpha\varepsilon}\big|\cM_{\varepsilon}^{1/2}(0)\|\partial_xu_{\varepsilon}\|_{L^2}\\
&\le  \cK_{\varepsilon}(0) + \big|\tfrac{2\beta^2}{\alpha^2\varepsilon^2 \delta}\big|\cM_{\varepsilon}(0) + \tfrac{\delta}{2}\|\partial_xu_{\varepsilon}\|_{L^2}^2,
\end{split}
\end{equation}
for any positive $\delta$. 

\medskip 
Then, taking a suitable $\delta=\delta(\alpha, \beta)$ and inserting \eqref{proof-grow-2b} in \eqref{proof-grow-2a} we have
\begin{equation}\label{proof-grow-2c}
\|\p_xu_{\varepsilon}(\cdot, t)\|_{L^2}^2 \lesssim_{\alpha, \beta} \; \cE_{\varepsilon}(0) + \cK_{\varepsilon}(0) + \cM_{\varepsilon}(0) + \cM^3_{\varepsilon}(0).
\end{equation}

\medskip 
Now we note that from \eqref{apriori-estimate-H1L-hypotheses} we get $\cK_{\varepsilon}(0) = O(\frac{1}{\varepsilon})$,  $\cM_{\varepsilon}(0)=O(1)$ and  
$\cE_{\varepsilon}(0)=O(1)$. Thus,  from \eqref{proof-grow-2b} and \eqref{proof-grow-2c} we obtain
\begin{equation}
\|\p_xu_{\varepsilon}(\cdot, t)\|_{L^2}^2  + \|v_{\varepsilon}(\cdot, t)\|_{L^2}^2 =  O(1/\varepsilon^2), 
\end{equation}
which implies \eqref{grow-norm-B}.
\end{proof}

\subsection{Weak limit for the Benney system}
Now we state a weak convergence theorem for solutions $(u_{\varepsilon}, v_{\varepsilon})$, namely
\begin{theorem}\label{Th:WC}
Let $(u_{\varepsilon}, v_{\varepsilon})$ be any solution of \eqref{S-Benney} with initial data satisfying the hypotheses of Lemma \ref{apriori-estimate-H1L2} and $\alpha\lambda/\beta<0$. There is $u\in L^\infty(\mathbb{R}_+;H^1)$ such that $u_\varepsilon\to u$ almost everywhere in $(x,t)\in\mathbb{R}\times (0,T)$ as $\varepsilon$ go to $0$, and $(u_{\varepsilon}, v_{\varepsilon})$ converges to $(u,\frac{\beta}{\lambda}\vert u\vert^2)$ in $L^{\infty}(\mathbb{R}_+;\, H^1)\times L^{\infty}(\mathbb{R_+};\, L^2)$ weak star, where $u=u(x,t)$ is the unique solution of the nonlinear Schr\"odinger equation 
\begin{equation}
i\partial_tu+\partial_x^2u=\tfrac{\alpha\beta}{\lambda}u\vert u\vert^2
\end{equation}
with data $u(x,0)=u_0(x)\in H^1$.
\end{theorem}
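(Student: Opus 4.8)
The goal is a weak-star limit statement, so the natural strategy is the standard compactness-plus-identification of the limit. First I would invoke Lemma \ref{apriori-estimate-H1L2}: since $\alpha\lambda/\beta<0$ and the data satisfy \eqref{apriori-estimate-H1L-hypotheses}, estimate \eqref{grow-norm-A} gives a uniform bound
\begin{equation*}
\sup_{0<\varepsilon<1}\big(\|u_\varepsilon\|_{L^\infty_tH^1_x}+\|v_\varepsilon\|_{L^\infty_tL^2_x}\big)<\infty.
\end{equation*}
By Banach--Alaoglu applied in $L^\infty(\R_+;H^1)$ and $L^\infty(\R_+;L^2)$ (both duals of separable spaces), I can extract a subsequence with $u_\varepsilon\rightharpoonup u$ weak-star in $L^\infty(\R_+;H^1)$ and $v_\varepsilon\rightharpoonup V$ weak-star in $L^\infty(\R_+;L^2)$ for some limit pair $(u,V)$.

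The crucial intermediate step is strong compactness for $u_\varepsilon$. I would read off the equation $i\partial_tu_\varepsilon+\partial_x^2u_\varepsilon=\alpha u_\varepsilon v_\varepsilon$: the right-hand side is bounded in $L^\infty_tL^1_x$ (product of $L^2$ functions), so $\partial_tu_\varepsilon$ is bounded in some negative-order space, while $u_\varepsilon$ is bounded in $H^1_x$. An Aubin--Lions type argument then yields, after passing to a further subsequence, strong convergence of $u_\varepsilon$ in $L^2_{loc}$ and hence $u_\varepsilon\to u$ almost everywhere on $\R\times(0,T)$; this gives the pointwise claim and, combined with the uniform bound $|u_\varepsilon|\le\psi$ implicit in the setting (or the $H^1$ bound plus dominated convergence), lets me pass to the limit in the nonlinearities. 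In particular $|u_\varepsilon|^2\to|u|^2$ strongly enough that $u_\varepsilon v_\varepsilon$ converges in the distributional sense to $uV$.

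Next I must identify $V=\tfrac{\beta}{\lambda}|u|^2$. This is where the transport structure enters: from the second equation in \eqref{S-Benney}, written as $\varepsilon\partial_tv_\varepsilon+\lambda\partial_xv_\varepsilon=\beta\partial_x|u_\varepsilon|^2$, I would test against a smooth compactly supported function $\phi$. The term $\varepsilon\partial_tv_\varepsilon$ pairs as $-\varepsilon\langle v_\varepsilon,\partial_t\phi\rangle$, which vanishes as $\varepsilon\to0$ by the uniform $L^2$ bound on $v_\varepsilon$; the remaining terms give $\lambda\partial_xV=\beta\partial_x|u|^2$ in the sense of distributions, using the strong convergence $|u_\varepsilon|^2\to|u|^2$. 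Integrating the relation $\partial_x(\lambda V-\beta|u|^2)=0$ and invoking the decay of both $V$ and $|u|^2$ at spatial infinity (consequence of the $L^2$ and $H^1$ bounds) forces $\lambda V=\beta|u|^2$, i.e. $V=\tfrac{\beta}{\lambda}|u|^2$.

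Finally I pass to the limit in the Duhamel/distributional formulation of the first equation: the linear terms converge by weak-star convergence, and the nonlinear term $\alpha u_\varepsilon v_\varepsilon\to\alpha u\cdot\tfrac{\beta}{\lambda}|u|^2=\tfrac{\alpha\beta}{\lambda}u|u|^2$ by the a.e.\ convergence and dominated convergence. Thus $u$ solves $i\partial_tu+\partial_x^2u=\tfrac{\alpha\beta}{\lambda}u|u|^2$ with data $u_0$; uniqueness of the $H^1$ solution of this cubic-NLS then guarantees the whole family (not merely a subsequence) converges, completing the proof. The main obstacle is the strong/a.e.\ convergence of $u_\varepsilon$: weak-star bounds alone do not let one pass to the limit in $u_\varepsilon v_\varepsilon$, so securing compactness in the $u$-component via the equation's regularizing structure is the technical heart of the argument, and one must be careful that the transport equation for $v_\varepsilon$ provides \emph{no} temporal compactness (the $\varepsilon\partial_t$ degenerates), which is precisely why the identification of $V$ must go through the distributional transport relation rather than through strong convergence of $v_\varepsilon$.
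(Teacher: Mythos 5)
Your proposal is correct and follows essentially the same route as the paper's (sketched) proof: uniform bounds from Lemma \ref{apriori-estimate-H1L2} plus Banach--Alaoglu for weak-star limits, Aubin--Lions/Rellich compactness to get strong $L^2_{loc}$ and a.e.\ convergence of $u_\varepsilon$, vanishing of the $\varepsilon\partial_t v_\varepsilon$ term to identify the transport limit $\lambda\partial_x V=\beta\partial_x|u|^2$, and identification of the nonlinear limits. You even make explicit a step the paper leaves implicit (integrating $\partial_x(\lambda V-\beta|u|^2)=0$ and using square-integrability to kill the constant), so no changes are needed.
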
 

We only will do a sketch of the proof because these argument are well known. More details on this technicality can be review in \cite{Added-Added, Cordero}, even for higher dimensional models of Schr\"odinger type.
\begin{proof}
Because of the uniform bounds given by Lemma \ref{apriori-estimate-H1L2}, we have a sequence $(u_\varepsilon, v_\varepsilon)$ and $(u,v)$ such that
\begin{equation}
\begin{cases}
u_\varepsilon  \overset{*}{\rightharpoonup} u&\text{in}\quad L^\infty(\mathbb{R}_+;H^1)\\
v_\varepsilon  \overset{*}{\rightharpoonup} v&\text{in}\quad L^\infty(\mathbb{R}_+;L^2)\\
\vert u_\varepsilon\vert^2  \overset{*}{\rightharpoonup} \Gamma &\text{in}\quad L^\infty(\mathbb{R}_+;L^2),
\end{cases}
\end{equation}
so 
\begin{equation}
\begin{cases}
\partial_{xx}u_\varepsilon  \overset{*}{\rightharpoonup} \partial_{xx}u &\text{in}\quad L^\infty(\mathbb{R}_+;H^{-1})\\
\partial_xv_\varepsilon  \overset{*}{\rightharpoonup} v&\text{in}\quad  L^\infty(\mathbb{R}_+;H^{-1})\\
\partial_x\vert u_\varepsilon\vert^2  \overset{*}{\rightharpoonup} \partial_x\Gamma &\text{in}\quad L^\infty(\mathbb{R}_+;H^{-1}).
\end{cases}
\end{equation}
As the map
\begin{align}\label{M:map}
H^1\times L^2&\to H^{-1} \\
(f,g)&\mapsto fg \notag
\end{align}
is continuous, one can assume that $u_\varepsilon v_\varepsilon$ has a weak$^*$ limit in $L^\infty(\mathbb{R}_+;H^{-1})$, namely
\begin{equation}
u_{\varepsilon}v_{\varepsilon}\overset{*}{\rightharpoonup}\varLambda \quad  \text{in} \quad L^{\infty}(\mathbb{R}_+; H^{-1}).
\end{equation}

Then
\begin{equation}
\begin{cases}
\partial_tu_\varepsilon  \overset{*}{\rightharpoonup} \partial_tu\quad\quad\text{in}\quad L^\infty(\mathbb{R}_+;H^{-1})\\
\partial_tv_\varepsilon  \overset{*}{\rightharpoonup} \partial_tv\quad\quad\text{in}\quad L^\infty(\mathbb{R}_+;H^{-1}),
\end{cases}
\end{equation}
and also
\begin{equation}
\begin{cases}
i\partial_tu+\partial_x^2u=\alpha\varLambda,\medskip \\
\lambda\partial_xv=\beta\partial_x\Gamma,
\end{cases}
\end{equation}
in the distribution sense, in $L^{\infty}(\mathbb{R}_+; H^{-1}).$

\medskip 
The proof of the result is finished when it is shown  that
\begin{equation}
\Gamma=|u|^2 \quad \text{and}\quad \varLambda=\frac{\beta}{\lambda}u\Gamma.
\end{equation}

\medskip 
To check this we consider the interval $[0,T]$, $\Omega\subset\mathbb{R}$ bounded, $B_0:=H^1(\Omega), B:=L^4(\Omega), B_1:=H^{-1}(\Omega)$ and the restriction $u_{\varepsilon}|_{\Omega}$. Next we use the Rellich-Kondrachov's theorem and the Lions-Aubin's theorem to have compact and continuous embeddings. Then some subsequence of  $u_{\varepsilon}|_{\Omega}$ (also labeled by $\varepsilon$) converges strongly to $u|_{\Omega}$ in $L^2([0,T]; L^4(\Omega))$. Hence
\begin{equation}
u_{\varepsilon}\underset{\varepsilon\to0}{\longrightarrow} u \ \text{strongly in} \  L^2([0,T]; L^2_{loc}(\mathbb{R})),
\end{equation}
and thus,
\begin{equation}
u_{\varepsilon}\underset{\varepsilon\to0}{\longrightarrow} u \quad a.e \quad  \text{in} \ (t,x)\in [0,T]\times \mathbb{R}
\end{equation}
and
\begin{equation}
|u_{\varepsilon}|^2\underset{\varepsilon\to0}{\longrightarrow}|u|^2 \quad  a.e. \quad  \text{in} \ (t,x)\in [0,T]\times \mathbb{R}^n.
\end{equation}

\medskip 
Since $|u_{\varepsilon}|^2\in L^{\infty}((0,\infty); L^2(\mathbb{R}))\hookrightarrow L^2([0,T]; L^2(\mathbb{R}))$ is bounded uniformly in $\varepsilon$, one gets
\begin{equation}
|u_{\varepsilon}|^2\overset{*}{\rightharpoonup} |\psi|^2 \quad  \text{in} \quad  L^2([0,T];L^2(\mathbb{R}))
\end{equation}
by reflexivity, so $\Gamma=|u|^2$.

\medskip 
The equality $\varLambda=\frac{\beta}{\lambda}u\Gamma$ follows by a standard argument. 
\end{proof}

\section{Proof of the results}

\subsection{\textbf{Proof of Theorem \ref{Th1}}}
\begin{proof}
As the transport solution in \eqref{DuhamelEq} is not easy to deal with, because of $\varepsilon\to0$ and the spacial derivative in the nonlinearity, then one can rewrite the transport equation in \eqref{S-Benney} as
\begin{equation}
\varepsilon\partial_t\Big(v-\frac{\beta}{\lambda}\vert u\vert^2\Big)+\lambda\partial_x\Big(v-\frac{\beta}{\lambda}\vert u\vert^2\Big)=-\frac{\varepsilon\beta}{\lambda}\partial_t\vert u\vert^2,
\end{equation}  
and with $w=v-\frac{\beta}{\lambda}\vert u\vert^2$ we have the solutions
\begin{equation}
w_{\varepsilon}(x,t)=\cT_{\varepsilon}(t)w_{0\varepsilon}-\tfrac{\beta}{\lambda}\int_0^{t}\cT_{\varepsilon}(t-s)\partial_t\vert u_{\varepsilon}(x,s)\vert^2ds.
\end{equation}

\medskip 
Notice that $$\cT_{\varepsilon}(t)\partial_t=-\frac{\varepsilon}{\lambda}\partial_t\cT_{\varepsilon}(t),$$
consequently
\begin{equation}\label{TransportSolution}
w_{\varepsilon}(x,t)=w_{0\varepsilon}\Big(x-\frac{\lambda}{\epsilon}t\Big)+\frac{\varepsilon\beta}{\lambda^2}\Big[1-\cT_{\varepsilon}(t)\Big] \vert u_{\varepsilon}(x,t)\vert^2
\end{equation}
where
\begin{equation}
 w_{0\varepsilon}=v_{0\varepsilon}-\frac{\beta}{\lambda}\vert u_{0\varepsilon}\vert^2.
\end{equation}

As $\Vert u_\varepsilon(\cdot,t)\Vert_{L_x^\infty}\lesssim\Vert u_\varepsilon(\cdot,t)\Vert_{H^1}$ because $H^1(\mathbb{R})\hookrightarrow C_{\infty}(\mathbb{R})$, then the first part of the theorem is now a immediate consequence of \eqref{TransportSolution}, the invariance by translation of $\Vert \cdot\Vert_{L^p(\mathbb{R})}$ and Lemma \ref{apriori-estimate-H1L-hypotheses}.

\medskip 
On the other hand, $$u_\varepsilon v_\varepsilon-\tfrac{\beta}{\lambda}u_\epsilon \vert u_\varepsilon  \vert^2=u_\varepsilon (v_\varepsilon-\tfrac{\beta}{\lambda} \vert u_\varepsilon  \vert^2),$$ then 
\begin{equation}
\big\| u_\varepsilon v_\varepsilon-\tfrac{\beta}{\lambda}u_\epsilon \vert u_\varepsilon  \vert^2\big\|_{L_T^{4/3}L_x^1}\lesssim T^{3/4}\big\| v_\varepsilon-\tfrac{\beta}{\lambda}\vert u_\epsilon \vert^2\big\|_{L_T^{\infty}L_x^2}
\end{equation}
follows by the Cauchy-Schwartz inequality and the uniform boundedness of $u_\varepsilon$ in the energy space again, due to the hypothesis.   

\medskip 
Now the proof is finished because of the smoothing effect:
$$\Big\|\int_0^tS(t-s)[u_\varepsilon v_\varepsilon-\tfrac{\beta}{\lambda}u_\varepsilon \vert u_\varepsilon\vert^2](x,s)ds \Big\|_{L_T^\infty L_x^2}\lesssim \big\| u_\varepsilon v_\varepsilon-\tfrac{\beta}{\lambda}u_\varepsilon \vert u_\varepsilon\vert^2\big\|_{L_T^{4/3}L_x^1} \lesssim  T^{3/4}O(\epsilon).$$
\end{proof}

\subsection{\textbf{Proof of Corollary \ref{Th1-corollary}}}

\begin{proof}
We have that 
\begin{equation}
\Big\|u_{\varepsilon}-S(t)u_{0\epsilon}+ i\frac{\alpha \beta}{\lambda}\int_0^tS(t-s)u_{\varepsilon}|u_{\varepsilon}|^2ds\Big\|_{L^{\infty}_TL^2_x}\lesssim T^{3/4}O(\varepsilon),
\end{equation}
so it is enough to verify that
\begin{equation}
\Big\|u-S(t)u_{0\epsilon}+ i\frac{\alpha \beta}{\lambda}\int_0^tS(t-s)u_{\varepsilon}|u_{\varepsilon}|^2ds\Big\|_{L^{\infty}_TL^2_x}\longrightarrow0
\end{equation}
as $\varepsilon\to0$, and apply the triangular inequality. Here
\begin{equation}
u(x,t)=S(t)u_0(x)- i\frac{\alpha \beta}{\lambda}\int_0^tS(t-s)u(x,s)|u(x,s)|^2\,ds,
\end{equation}
so it is only necessary to see for the nonlinear part. 

\medskip 
We have
\begin{equation}
\Big\vert \int_0^tS(t-s)u_{\varepsilon}|u_{\varepsilon}|^2ds\Big\vert^2\leq \Big(\int_0^t\vert u_{\varepsilon}\vert |u_{\varepsilon}|^2ds\Big)^2\leq \Big(\int_0^t\vert \psi\vert |\psi|^2ds\Big)^2
\end{equation}
and
\begin{align}
\Big(\int_{-\infty}^{+\infty}\Big(\int_0^t\vert \psi\vert |\psi|^2ds\Big)^2dx\Big)^{1/2}&\leq \int_0^t\Vert\vert \psi\vert |\psi|^2\Vert_{L^2_x}ds\\\notag
&\leq\int_0^t\Vert \psi\Vert^2_{L^\infty_x}\Vert \psi\Vert_{L^2_x}ds\\\notag
&\leq T\Vert \psi\Vert^2_{L^\infty_TL^\infty_x}\Vert \psi\Vert_{L^\infty_TL^2_x}.
\end{align}
As $u_\varepsilon\to u$ a.e, because of Theorem \ref{Th:WC}, the dominated convergence theorem give us
\begin{equation}
\lim_{\varepsilon\to0}\Big\Vert \int_0^tS(t-s)u_{\varepsilon}|u_{\varepsilon}|^2ds\Big\Vert_{L^\infty_TL^2_x}=\Big\Vert \int_0^tS(t-s)u|u|^2ds\Big\Vert_{L^\infty_TL^2_x}.
\end{equation}
Hence, we can conclude that  $\lim\limits_{\varepsilon\to0}\Vert u_{\varepsilon}-u\Vert_{L^\infty_TL^2_x}=0$ and the convergence $\lim\limits_{\varepsilon\to0}\Vert v_{\varepsilon}-\tfrac{\beta}{\lambda}\vert u\vert^2\Vert_{L^\infty_TL^2_x}=0$ follows by a similar argument. 
\end{proof}

\subsection{\textbf{Proof of Theorem \ref{Th2}}}
\begin{proof}
First we prove that the compatibility \eqref{Comp} is achieved.
$$\Vert v_{0\varepsilon}-\tfrac{\beta}{\lambda}\vert u_{0\varepsilon}\vert^2\Vert_{L^2}\leq \Vert v_{0\varepsilon}-\tfrac{\beta}{\lambda}\vert u_{0}\vert^2\Vert_{L^2}+\Vert \tfrac{\beta}{\lambda}\vert u_{0}\vert^2-\tfrac{\beta}{\lambda}\vert u_{0\varepsilon}\vert^2\Vert_{L^2}$$
and
\begin{align*}
\Vert \tfrac{\beta}{\lambda}\vert u_{0}\vert^2-\tfrac{\beta}{\lambda}\vert u_{0\varepsilon}\vert^2\Vert_{L^2}&=\vert\tfrac{\beta}{\lambda}\vert\Vert (\vert u_{0}\vert-\vert u_{0\varepsilon}\vert)(\vert u_{0}\vert+\vert u_{0\varepsilon}\vert)\Vert_{L^2}\\
&\lesssim\Vert \vert u_{0}\vert-\vert u_{0\varepsilon}\vert \Vert_{L^2}\\
&\leq\Vert u_{0}- u_{0\varepsilon} \Vert_{L^2},
\end{align*}
so
$$\lim_{\varepsilon\to0}\Vert v_{0\varepsilon}-\tfrac{\beta}{\lambda}\vert u_{0\varepsilon}\vert^2\Vert_{L^2}=0.$$

The same argument gives us that
\begin{align*}
\Vert v_{\varepsilon}-\tfrac{\beta}{\lambda}\vert u\vert^2\Vert_{L^2}&\leq \Vert v_{\varepsilon}-\tfrac{\beta}{\lambda}\vert u_{\varepsilon}\vert^2\Vert_{L^2}+\vert\tfrac{\beta}{\lambda}\vert\Vert \vert u_{\varepsilon}\vert^2-\vert u\vert^2\Vert_{L^2}\\
&\lesssim \Vert v_{\varepsilon}-\tfrac{\beta}{\lambda}\vert u_{\varepsilon}\vert^2\Vert_{L^2}+\Vert u_{\varepsilon}-u\Vert_{L^2},
\end{align*}
then 
\begin{equation}
\lim_{\varepsilon\to0}\Vert v_{\varepsilon}-\tfrac{\beta}{\lambda}\vert u\vert^2\Vert_{L^2}=0.
\end{equation}

Now let's check the $H^1$-convergence. First let's notice that as in the previous proof 
$$\Big\|\partial_x\int_0^tS(t-s)[u_\varepsilon v_\varepsilon-\tfrac{\beta}{\lambda}u_\varepsilon \vert u_\varepsilon\vert^2](x,s)ds \Big\|_{L_T^\infty L_x^2}\lesssim \big\| \partial_x [u_\varepsilon (v_\varepsilon-\tfrac{\beta}{\lambda} \vert u_\varepsilon  \vert^2)]\big\|_{L_T^{4/3}L_x^1}$$
and
\begin{align*}
 \big\| \partial_x [u_\varepsilon (v_\varepsilon-\tfrac{\beta}{\lambda} \vert u_\varepsilon  \vert^2)]\big\|_{L_T^{4/3}L_x^1}&\leq \big\| \partial_xu_\varepsilon (v_\varepsilon-\tfrac{\beta}{\lambda} \vert u_\varepsilon  \vert^2)\big\|_{L_T^{4/3}L_x^1}+\big\|  u_\varepsilon \partial_x(v_\varepsilon-\tfrac{\beta}{\lambda} \vert u_\varepsilon  \vert^2)\big\|_{L_T^{4/3}L_x^1}\\
 &\lesssim T^{3/4}\Big(\big\| v_\varepsilon-\tfrac{\beta}{\lambda}\vert u_\varepsilon \vert^2\big\|_{L_T^{\infty}L_x^2}+\big\| \partial_x(v_\varepsilon-\tfrac{\beta}{\lambda}\vert u_\varepsilon \vert^2)\big\|_{L_T^{\infty}L_x^2}\Big),
 \end{align*}
so we need to look for $\partial_xw_\varepsilon$ as in the proof of Theorem \ref{Th1}.
$$\partial_xw_\varepsilon(x,t)=\partial_xw_{0\varepsilon}\Big(x-\frac{\lambda}{\epsilon}t\Big)+\frac{\varepsilon\beta}{\lambda^2}\Big[1-\cT_{\varepsilon}(t)\Big]\partial_x \vert u_{\varepsilon}(x,t)\vert^2,$$
then
\begin{equation}
\Vert \partial_xw_\varepsilon(\cdot,t)\Vert_{L^2}=O(\varepsilon)
\end{equation}
and therefore 
\begin{equation}
\Big\|\partial_x\int_0^tS(t-s)[u_\varepsilon v_\varepsilon-\tfrac{\beta}{\lambda}u_\varepsilon \vert u_\varepsilon\vert^2](x,s)ds \Big\|_{L_T^\infty L_x^2} =  T^{3/4}O(\varepsilon).
\end{equation}

Now, this means that
\begin{equation}
\Big\|\partial_x\Big(u_{\varepsilon}-S(t)u_{0\epsilon}+ i\frac{\alpha \beta}{\lambda}\int_0^tS(t-s)u_{\varepsilon}|u_{\varepsilon}|^2ds\Big)\Big\|_{L^{\infty}_TL^2_x}\lesssim T^{3/4}O(\varepsilon)
\end{equation}
and the proof is finished. 
\end{proof}

\subsection{\textbf{Proof of Theorem \ref{Head-Th}}}
\begin{proof}
This theorem is a clear consequence of the procedure used to obtain the before results in the context of  the Benney system \eqref{S-Benney}.
\end{proof}

\section{Final remarks} \label{Final-Remarks}

\begin{remark}[Local well-posedness]\label{subsection-lwp-GBenney}
Notice that if it is considered $|\lambda|=|\lambda'|=1$  in  system \eqref{S-GBenney}, the same theory developed in \cite{G-T-V} allows to conclude 
local well-posedness for this system for initial data $(u_0, v_0, z_0) \in H^s\times H^{\kappa} \times  H^{\kappa}$ with $(s, \kappa)$ satisfying the conditions in \eqref{wp-condition-Z-1d}. This fact follows immediately observing that the linear parts corresponding to the transport equations are not coupled.
\end{remark}

\begin{remark}[Conservation laws and global solutions]
Formally, system  \eqref{S-GBenney} satisfies the following conservation laws:
\begin{equation}\label{Mass-GBenney}
\widetilde{\cM}(t):=\int_{-\infty}^{+\infty}|u|^{2}dx=\widetilde{\cM}(0)\quad \big(\text{mass}\big),
\end{equation}

\begin{equation}\label{Momment-GBenney}
\widetilde{\cK}(t):=\int_{-\infty}^{+\infty}\Big[ \tfrac{\alpha}{2\beta}v^2 + \tfrac{\alpha'}{2\beta'}z^2 +\tfrac{1}{\varepsilon} \text{Im}\, u\bar{u}_x\Big]dx=\widetilde{\cK}(0)
\quad \big(\text{moment}\big)
\end{equation}
and
\begin{equation}\label{Energy-GBenney}
\widetilde{\cE}(t):=\int_{-\infty}^{+\infty}\Big(|u_x|^2 + \tfrac{\tau}{2}|u|^4 + \alpha v|u|^2 + \alpha' z|u|^2 
-\tfrac{\alpha \lambda}{2\beta}v^2-\tfrac{\alpha' \lambda'}{2\beta'}z^2\Big)dx=\widetilde{\cE}(0)
\quad \big(\text{energy}\big),
\end{equation}
with $\beta, \beta' \neq 0$ and for all $0\le t< T^*_{\varepsilon}$, where $T^*_{\varepsilon}$ is the maximal time of existence for the respective solution. 

\medskip
Since the region of regularity in described in Remark \ref{subsection-lwp-GBenney} includes the case $H^1\times L^2 \times L^2$  when $|\lambda|=|\lambda'|=1$, the conservation laws above ensure global well-posedness in this case. 
\end{remark}

\begin{remark}
We observe that the same process used to prove Lemma \ref{apriori-estimate-H1L-hypotheses} ensure that the condition \eqref{bounded-u-epsilon} in Theorem \ref{Head-Th} is valid in the case:
$$\tfrac{\alpha \lambda }{\beta}<0\quad \text{and}\quad \tfrac{\alpha' \lambda' }{\beta'}<0.$$
\end{remark}

\medskip 
We are currently adapting the ideas used in this work to obtain more accurate results in the same direction for the Zakharov system \eqref{Z} in higher dimensions.

\end{document}